\newtheorem{definition}{Definition}[section]
\newtheorem{theorem}{Theorem}[section]
\newtheorem{remark}{Remark}[section]
\newtheorem*{maintheorem*}{Main Theorem}
\numberwithin{equation}{section}
\renewcommand{\i}{\ifmmode\mathit{\mathchar"7010 }\else\char"10 \fi}
\renewcommand{\j}{\ifmmode\mathit{\mathchar"7011 }\else\char"11 \fi}
\newcommand{\R}{\mathbb{R}}
\newcommand{\D}{\mathcal{D}}
\newcommand{\Dx}{\Delta x}
\newcommand{\Dt}{\Delta t}
\newcommand{\U}{{\bf U}}
\newcommand{\W}{{\bf W}}
\newcommand{\F}{{\bf F}}
\newcommand{\V}{{\bf V}}
\newcommand{\A}{\mathcal A}
\newcommand{\B}{\mathcal B}
\newbox\bokstav
\newdimen\hoyde
\def\bgl{{\hbox{$\left\lbrack\vbox to 8.5pt{}\right.\nOspace$}}}
\def\Bgl{{\hbox{$\left\lbrack\vbox to 11.5pt{}\right.\nOspace$}}}
\def\bggl{{\hbox{$\left\lbrack\vbox to 14.5pt{}\right.\nOspace$}}}
\def\Bggl{{\hbox{$\left\lbrack\vbox to 17.5pt{}\right.\nOspace$}}}
\def\bgr{{\hbox{$\left\rbrack\vbox to 8.5pt{}\right.\nOspace$}}}
\def\Bgr{{\hbox{$\left\rbrack\vbox to 11.5pt{}\right.\nOspace$}}}
\def\bggr{{\hbox{$\left\rbrack\vbox to 14.5pt{}\right.\nOspace$}}}
\def\Bggr{{\hbox{$\left\rbrack\vbox to 17.5pt{}\right.\nOspace$}}}
\def\nOspace{\nulldelimiterspace=0pt \mOth}
\def\mOth{\mathsurround=0pt}
\def\Ljmp{\mathopen{\lbrack\!\lbrack}}
\def\Rjmp{\mathclose{\rbrack\!\rbrack}}
\def\bgLjmp{\mathopen{\bgl\mskip-6mu\bgl}}
\def\bgRjmp{\mathclose{\bgr\mskip-6mu\bgr}}
\def\BgLjmp{\mathopen{\Bgl\!\!\Bgl}}
\def\BgRjmp{\mathclose{\Bgr\!\!\Bgr}}
\def\bggLjmp{\mathopen{\bggl\!\!\bggl}}
\def\bggRjmp{\mathclose{\bggr\!\!\bggr}}
\def\BggLjmp{\mathopen{\Bggl\!\!\Bggl}}
\def\BggRjmp{\mathclose{\Bggr\!\!\Bggr}}
\def\jmp#1{
\setbox\bokstav=\hbox{$ \left. #1\right. $}
\hoyde=\ht\bokstav 
\advance\hoyde by \dp\bokstav
\hbox{$
        \ifinner
                \ifdim\hoyde<10pt
                   \Ljmp #1 \Rjmp%
                \else
                   \ifdim\hoyde <11pt
                      \Ljmp #1 \Rjmp%
                   \else
                      \ifdim\hoyde <14pt
                          \bgLjmp #1 \bgRjmp%
                      \else
                          \ifdim\hoyde <20pt
                             \BgLjmp #1 \BgRjmp%
                          \else
                              \bggLjmp #1 \bggRjmp%
                          \fi
                      \fi
                   \fi
                \fi
        \else
                \ifdim\hoyde<8.5pt
                   \Ljmp #1 \Rjmp%
                \else
                   \ifdim\hoyde <11.5pt
                      \bgLjmp #1 \bgRjmp%
                   \else
                      \ifdim\hoyde <14.5pt
                          \BgLjmp #1 \BgRjmp%
                      \else
                          \ifdim\hoyde <17.5pt
                             \bggLjmp #1 \bggRjmp%
                          \else
                              \BggLjmp #1 \BggRjmp%
                          \fi
                      \fi
                   \fi
            \fi
        \fi
$}
}
\newcounter{asnr}
\ifnum\value{asnr}=0 \stepcounter{asnr} 
\begin{document}

\date{\today}

\title[Accurate numerical schemes ]{Accurate numerical schemes for approximating \\initial-boundary value problems \\
for systems of conservation laws.}

\author[Siddhartha Mishra]{Siddhartha Mishra} \address[Siddhartha
Mishra]{\newline Center of Mathematics for Applications (CMA) \newline
 University of Oslo, P.O. Box-1053, \newline 
 Blindern, Oslo - 0316, Norway} \email[]{smishra@sam.math.ethz.ch}

\author[Laura V. Spinolo]{Laura V. Spinolo}\address[Laura V. Spinolo]
{\newline IMATI-CNR, \newline I-27100, Pavia, Italy and  
\newline Universit\"at Z\"urich,  \newline CH-8057 Z\"urich, Switzerland}
\email[]{spinolo@imati.cnr.it}

\subjclass{65M06,35L65.}

\maketitle

\begin{abstract}
Solutions of initial-boundary value problems for systems of conservation laws depend 
on the underlying viscous mechanism, namely different viscosity operators lead to different limit solutions. 
Standard numerical schemes for approximating conservation laws do not take into account this fact and converge to 
solutions that are not necessarily physically relevant. We design numerical schemes that incorporate 
explicit information about the underlying viscosity mechanism and approximate the physically relevant solution. 
Numerical experiments illustrating the robust performance of these schemes are presented.
\end{abstract}

\section{Introduction}
\label{sec:intro}
Many problems in physics and engineering are modeled by systems of conservation laws
\begin{equation}
\label{eq:cl}
\begin{aligned}
\U_t + \F(\U)_x &= 0.
\end{aligned}
\end{equation}
Here, $\U:\Omega \times \R_+ \to \R^m$ is the vector of unknowns and $\F:\R^m \to \R^m$ is the flux vector. The spatial domain is a set $\Omega \subset \R$. The above equations are augmented with initial data. If $\Omega$ is a bounded domain, the conservation laws are augmented with suitable boundary conditions. Examples of conservation laws include the shallow water equations of oceanography, the Euler equations of gas dynamics and the equations of MagnetoHydroDynamics (MHD).

We assume that the system of conservation laws is strictly hyperbolic, i.e. the eigenvalues of the Jacobian
 {\color{black}matrix} $\F_{\U}$ are real 
{\color{black} and distinct. Also, we assume that all the eigenvalues of
$\F_{\U}$ are bounded away from $0$:
\begin{equation}
\label{eq:ncharb}
        \lambda_1 (\U) < \dots < \lambda_k (\U) < -d < 0 < d < \lambda_{k+1} (\U) < \dots < \lambda_m (\U)
\end{equation}
for some positive constant $d>0$ and some integer $k < m$.

 It is well known (see Dafermos~\cite[Chapter 6]{DAF1}) that in general solutions of \eqref{eq:cl} form discontinuities (shock waves, contact discontinuities)
in finite time even when the initial data are smooth. Hence, solutions of \eqref{eq:cl} are defined in 
the sense of distributions.

In general, the distributional solution of a given Cauchy or initial-boundary value problem is not unique and hence various 
admissibility conditions have been introduced in the attempts at selecting a unique solution, see the book by 
Dafermos~\cite[Chapters 4 and 8]{DAF1} 
for an extended discussion. These approaches often involve the celebrated \emph{entropy condition}, which can be formulated as follows: assume that 
system~\eqref{eq:cl} admits an entropy-entropy flux pair, namely   
there exists a convex function $S:\R^m \to \R$ and a function $Q:\R^m \to \R$ 
such that 
\begin{equation}
\label{eq:ent1}
Q_{\U} = S_\U \F_\U,
\end{equation} 
where  $S_{\U}$ and $Q_{\U}$ denote the gradients of the function $S$ and $Q$, respectively. 
A distributional solution $U$ satisfies the entropy admissibility condition if the following inequality holds in the 
sense of distributions: 
\begin{equation}
\label{eq:ent}
S(\U)_t + Q(\U)_x \leq 0.
\end{equation}
Here, two remarks are in order:  first, in general physical systems admit entropy-entropy flux pairs. Second, systems of conservation laws like \eqref{eq:cl} are derived by neglecting small scale effects like diffusion. Inclusion of these small effects in \eqref{eq:cl} results in the mixed hyperbolic-parabolic system:
\begin{equation}
\label{eq:vcl}
\U^{\epsilon}_t + \F(\U^{\epsilon})_x = \epsilon\left(\B(\U^{\epsilon})\U^{\epsilon}_x\right)_x.
\end{equation}
Here, $\epsilon$ is a (small) viscosity parameter and  $\B:\R^m \to \R^{m\times m}$ is the viscosity matrix. For example, the Navier-Stokes equations are a viscous regularization of the Euler equations of gas dynamics. {\color{black}In physical systems, the entropy admissibility criterion is consistent with the zero small scale effects limit}, namely one can show that, if the solutions of the viscous approximation~\eqref{eq:vcl} 
converge in a strong enough topology, then the limit satisfies~\eqref{eq:ent}.

We now focus on the initial-boundary value problem obtained by coupling the system of conservation laws~\eqref{eq:cl} with 
the Cauchy and Dirichlet data 
\begin{equation}
\label{eq:cl:bc}
        \U(x, 0) = \U_0(x), \quad x \in \Omega = (X_l, \infty) \qquad \U( X_l, t) = \bar \U (t), \quad t \in \R_+.
\end{equation}
The study of the initial-boundary value problem poses additional difficulties as compared to the study of the Cauchy problem: 
first, the problem~\eqref{eq:cl}-\eqref{eq:cl:bc} is, 
in general, ill posed (i.e. it possesses no solutions) unless additional conditions are imposed on the data $\bar \U$. Possible 
admissibility criteria on $\bar \U$ are discussed in Dubois and LeFloch~\cite{DLF1}. 
 
Another additional difficulty one has to tackle when studying initial-boundary value problems is the following: consider the viscous approximation~\eqref{eq:vcl} {\color{black}coupled with the initial and
boundary data 
\begin{equation}
\label{e:vcl:bd}
       \U^{\epsilon}(x, 0) = \U_0(x), \quad x \in \Omega = (X_l, \infty) \qquad \U^{\epsilon}( X_l, t) = \U_l (t), \quad t \in \R_+.
\end{equation}
Assume that the initial-boundary value problem~\eqref{eq:vcl},~\eqref{e:vcl:bd} is well posed (this is not always the case in the case when $\B$ is singular) and that for $\epsilon \to 0^+$ the solutions converge 
in a suitable topology to a limit $\U$. In general, because of boundary layer phenomena, $\U$ \emph{may not} satisfy 
the boundary condition $\U_l (t)$ pointwise. Dubois and LeFloch~\cite{DLF1} showed that, if 
the solutions of the viscous approximation~\eqref{eq:vcl} converge as $\epsilon \to 0^+$ 
to a solution of the initial-boundary 
problem~\eqref{eq:cl},~\eqref{eq:cl:bc} in a sufficiently strong topology, 
then the following inequality holds:
\begin{equation}
    \label{eq:DLF}
Q(\overline{\U}(t)) - Q(\U_l(t)) - \langle S_{\U}(\U_l(t)),\left(\F(\overline{\U}(t)) -\F(\U_l(t))\right) \rangle \leq 0.
\end{equation}
Here $\langle \cdot, \cdot \rangle$ denotes the standard scalar product in $\R^m$.

A further difficulty in the study of initial-boundary value problems was pointed out in the works by Gisclon and Serre~\cite{G1, GS1}: they} showed that the limit of the viscous approximation~\eqref{eq:vcl} 
\emph{depends} on the underlying viscosity mechanism. In other words, the limit 
of~\eqref{eq:vcl} in general changes if one changes the viscosity matrix $\B$.

As an example, we consider the linearized shallow water equations \eqref{eq:lsws} with initial data \eqref{eq:lswinit} and boundary data \eqref{eq:ldir}. The system is a \emph{linear}, strictly hyperbolic, $2 \times 2$ system and is the simplest possible problem that can be considered in this context. We consider two different viscosity operators: an \emph{artificial} uniform (Laplacian) viscosity \eqref{eq:lswslap} and the \emph{physical} 
eddy viscosity \eqref{eq:lswsed}. The resulting limit solutions are shown in the left of figure \ref{fig:1}. As shown in the figure, there is a significant difference in solutions (near the boundary) corresponding to different viscosity 
operators.  

An extended discussion concerning the initial boundary value problem for 
systems of conservation laws and its viscous approximation can be found in
the books by Serre~\cite[Chapters 14 and 15]{Serre:b1, Serre:b2}, while we refer to the lecture notes by Serre~\cite{Serre:lecturenotes} 
and to the rich bibliography therein for the theoretical treatment of the discrete approximation of viscous shock profiles. 
To conclude, we stress that analytically establishing the convergence $\epsilon \to 0^+$ for~\eqref{eq:vcl} is still an open problem in the general case, but results 
are available in more specific cases: in particular, Gisclon~\cite{G1} showed local-in-time convergence in the case when $\B$ is invertible and, by extending the analysis in 
Bianchini and Bressan~\cite{BIABRE:ANNALS},
Ancona and Bianchini~\cite{ANBIA} proved global-in-time convergence in the case when $\B$ is the identity.
\subsection{Numerical schemes}
Numerical schemes play a very important role in the study of system of conservation laws. Conservative finite difference (finite volume) 
methods are among the most popular discretization frameworks for \eqref{eq:cl}. See the book by LeVeque~\cite{LEV1} for an extended discussion. {\color{black}Given the real numbers 
$X_l <X_r$}, we discretize the computational 
domain $[X_l,X_r]$ by $N+1$ equally spaced points $x_{j+1/2} = X_l + j \Dx$ with $X_{1/2} = X_l$ and with mesh size $\Dx$ and we set $x_{j} = \frac{x_{j-1/2} + x_{j+1/2}}{2}$. Time is discretized with a time step $\Dt^n$. The mesh size and time step are 
related by a standard CFL condition.

The aim is to approximate cell averages $\U^n_j$ of the unknown $\U$ in the cell ${\mathcal C}_j = [x_{j-1/2},x_{j+1/2})$ at 
time $t^n$ by the scheme
\begin{equation}
\label{eq:fvs}
\U^{n+1}_j = \U^n_{j} - \frac{\Dt^n}{\Dx}\left(\F^n_{j+1/2} - \F^n_{j-1/2}\right).
\end{equation}
Here, $\F^n_{j+1/2} = \F(\U^n_j,\U^n_{j+1})$ is the numerical flux. The numerical flux is obtained by solving 
(approximately) the Riemann problem for \eqref{eq:cl} with the states $\U^n_j$ and $\U^{n+1}_j$. 

Following \cite{DLF1,LEV1}, the Dirichlet boundary conditions at $X=X_l$ are imposed by setting in the \emph{ghost} cell $[x_{-1/2},x_{1/2}]$:
\begin{equation}
\label{eq:bcnum}
\U^n_0 = \U_l(t^n).
\end{equation}

\begin{figure}[htbp]
\centering
\subfigure[Viscous profile]{\includegraphics[width=0.48\linewidth]{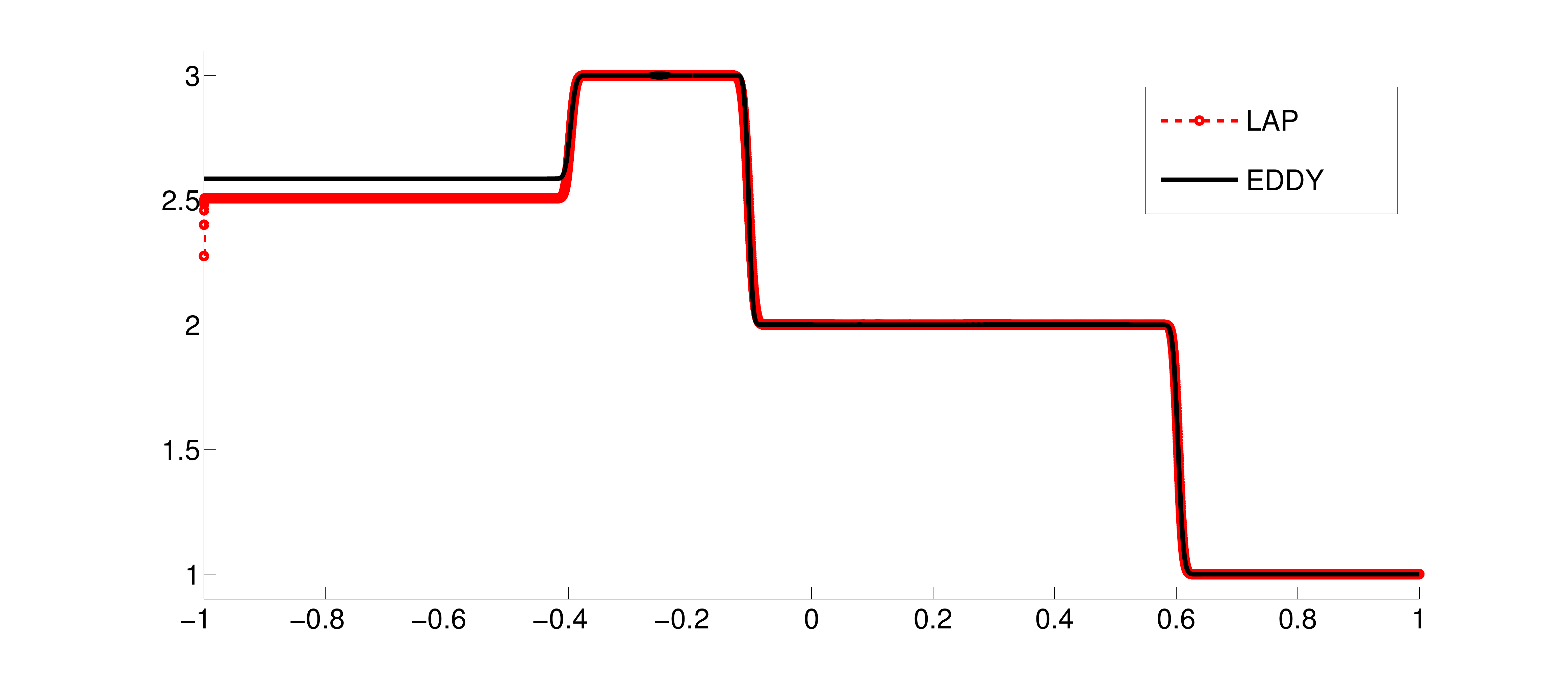}}
\subfigure[Roe scheme]{\includegraphics[width=0.48\linewidth]{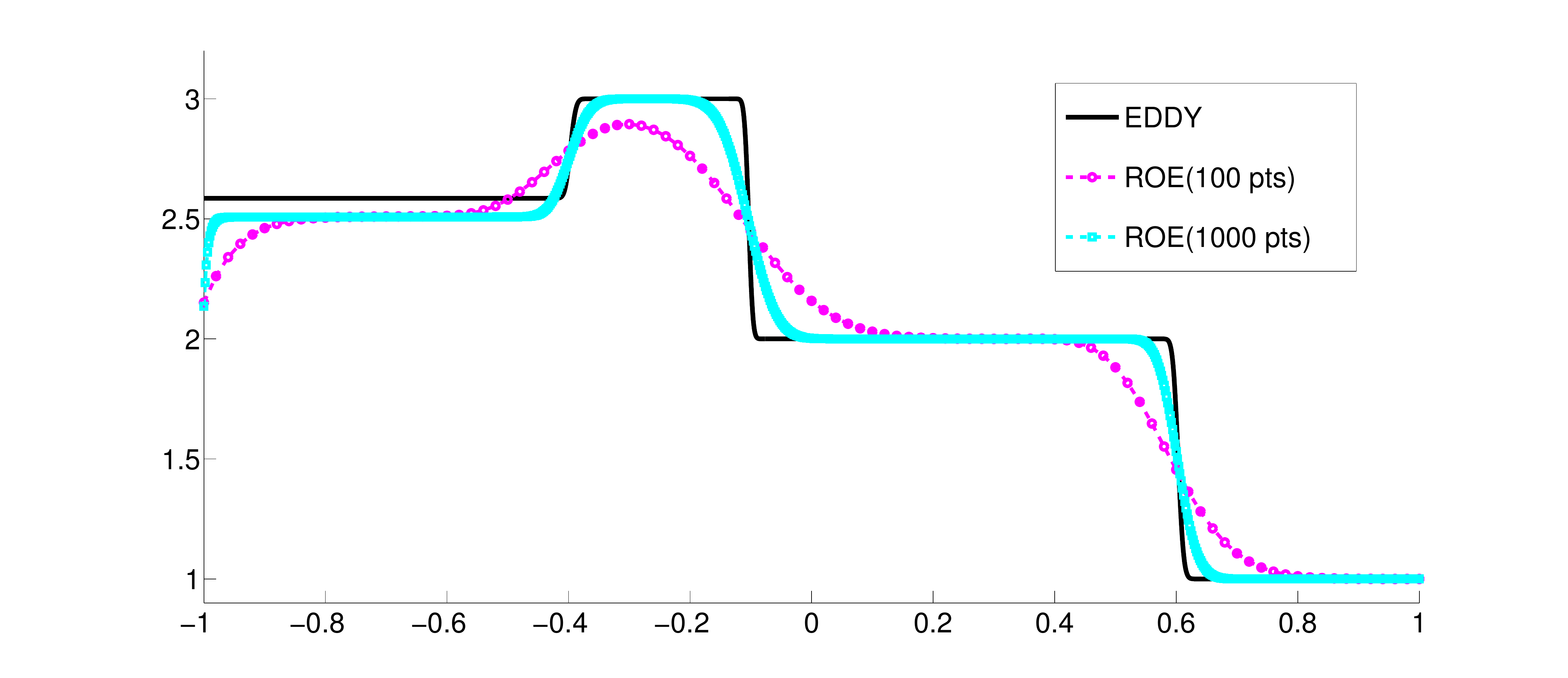}}
\caption{Left: The limit viscous profile for the linearized shallow water equations \eqref{eq:lsws} with uniform (Laplacian) viscosity \eqref{eq:lswslap} and 
eddy viscosity \eqref{eq:lswsed}. Right: Approximate solutions with the Roe (Godunov) scheme for the linearized shallow water equations for the same data as the left figure.}
\label{fig:1}
\end{figure}
However, standard numerical schemes may not converge to the physically relevant solution of the initial boundary value problem for a system of conservation laws. We illustrate this by again considering the linearized shallow water equations \eqref{eq:lsws} with initial data 
\eqref{eq:lswinit} and boundary data \eqref{eq:ldir}. The results with a standard Roe (Godunov) scheme for this linear system are presented in figure \ref{fig:1}, right. The figure clearly shows that the Roe scheme converges to a solution that is different from the physically relevant solution of the system, realized as a limit of the eddy viscosity approximation \eqref{eq:lswsed}. In fact, the solution converges to the limit of the \emph{artificial} uniform viscosity approximation \eqref{eq:lswslap}.

The problem with standard numerical schemes approximating the initial-boundary value problem \eqref{eq:cl} lies 
in the fact that they do not incorporate explicit information about the underlying viscous approximation~\eqref{eq:vcl}. The implicit \emph{numerical viscosity} added by such schemes may lead to the schemes converging to an incorrect solution.  This situation presents analogies with the numerical 
approximation of non-classical shocks (see LeFloch\cite{LF1}), 
non-conservative hyperbolic systems (see Castro, LeFloch, Munoz Ruiz and Pares~\cite{CLMP1}) and conservation laws with discontinuous coefficients (see Admiurthi, Mishra and Veerappa Gowda~\cite{AGSID1}}).  

Here, we design numerical schemes that incorporate explicit information about the underlying viscous operators. 
Consequently, these schemes approximate the physically relevant 
solutions of system of conservation laws. The schemes are based on the following two ingredients:
\begin{itemize}
\item [(i.)] An entropy conservative discretization of the flux $\F$ in \eqref{eq:cl} (see Fjordholm, Mishra and Tadmor~\cite{FMT4} and Tadmor~\cite{TAD1}).
\item [(ii.)] Numerical diffusion operators for \eqref{eq:cl} that are based on the underlying viscosity matrix $\B$ in \eqref{eq:vcl}.
\end{itemize}
We present both first- and second-order schemes that are shown (numerically) to converge to the physically relevant solution of the system of conservation laws. 

The rest of the paper is organized as follows: in Section~\ref{sec:tf}, we discuss the theoretical results concerning
the initial-boundary value problems~\eqref{eq:vcl} that will be used in the following sections. 
In particular, explicit solutions of the boundary value problem for a linear system are presented. In Section \ref{sec:schemes}, we present numerical schemes for the system of conservation laws \eqref{eq:cl} 
that converge to the physically relevant solution. Second-order schemes are discussed in Section \ref{sec:so}.

\section{Theoretical framework}
\label{sec:tf}
In the following section, we focus on the so-called \emph{boundary Riemann problem}, which is posed when 
the Cauchy and Dirichlet data for the mixed hyperbolic-parabolic system \eqref{eq:vcl} are two constant states, $\U_0(x) \equiv \U_0 \in \R^m$ and $\U_l (t) \equiv \U_l \in \R^m$. The solution of a general initial-boundary value problem can be build using the solutions of the boundary Riemann problem and standard Riemann problems in the interior, see Goodman~\cite{Goodman} and Sabl\'e-Tougeron~\cite{SableTougeron} (Glimm scheme) and Amadori~\cite{Amadori} (wave front-tracking algorithm).

\subsection{Linear case}
\label{s:lin}
\subsubsection{The solution of the Riemann problem in the linear case}
\label{s:linrie}
We start by recalling the solution of the Riemann problem obtained by coupling the linear 
system 
\begin{equation}
\label{eq:lin}
   \U_t + \A \U_x =0 \qquad \U \in \R^m
\end{equation}
with the initial datum
\begin{equation}
\label{eq:linrie}
      \U(0, x) =
\left\{
\begin{array}{ll}
       \U^- & x < 0  \\
       \U^+ & x  > 0. \\
\end{array}
\right. 
\end{equation}
In~\eqref{eq:lin}, $\A$ is a constant, strictly hyperbolic $m \times m$ matrix, and in~\eqref{eq:linrie} $\U^+$ and 
$\U^-$ are two given values in $\R^m$. 

Denote by $\lambda_1, \dots, \lambda_m$ the eigenvalues of $\A$ and by $R_1, \dots, R_m$ the corresponding 
right eigenvectors and consider the linear system 
\begin{equation}
\label{eq:linsyst}
  U^- + \sum_{i=1}^{m} \alpha_i R_i = U^+, 
\end{equation}
which by strict hyperbolicity admits a unique solution $(\alpha_1, \dots, \alpha_m)$. Then the solution 
of~\eqref{eq:lin}-\eqref{eq:linrie} is 
\begin{equation}
\label{eq:riesolver}  
\U (t, x) =
  \left\{
  \begin{array}{lll}
         U^-  & \text{if $x < \lambda_1 t$} & \\
         \displaystyle U^- + \sum_{i=1}^j \alpha_i R_i   &
         \text{if $\lambda_j t < x < \lambda_{j+1} t$}, & j =1, \dots, m-1 \\
         U^+   & \text{if $x > \lambda_m t$} & \\
  \end{array} 
  \right.
\end{equation} 
\subsubsection{The solution of the boundary Riemann problem in the linear case}
\label{s:blinrie}
We now consider the boundary Riemann problem obtained by coupling the linear mixed hyperbolic-parabolic system
\begin{equation}
\label{eq:linv}
   \U^{\epsilon}_t + \A \U^{\epsilon}_x =\epsilon \B \U^{\epsilon}_{xx} \qquad \U \in \R^m
\end{equation}
with the Dirichlet and Cauchy data,
\begin{equation}
\label{eq:linbrie}
      \U(t, 0) = \U_l, \qquad \U(0, x) = \U_0 (x), \qquad \forall \, t> 0, \; x>0, 
\end{equation}
and by taking the limit $\epsilon \to 0^+$.
The matrix $\A$ in~\eqref{eq:linv} is a constant $m \times m$ matrix satisfying~\eqref{eq:ncharb}, and $\B$ is another constant, 
$m \times m$ matrix which depends on the underlying physical model (we discuss explicit examples later in this paper). The data $U_l$ and $U_0$ in~\eqref{eq:linbrie} are constant states in $\R^m$. Note that, in general,
the problem~\eqref{eq:linv},\eqref{eq:linbrie} may be ill-posed if the matrix 
$\B$ is not invertible. However, to simplify the exposition in the present paper we always choose the data~\eqref{eq:linbrie} in such a way that it is well-posed.

As mentioned in the introduction, one of the main challenges coming from the presence of the boundary is the following: denote by $\U$ the limit 
$\epsilon \to 0^+$ of $U^{\epsilon}$, then in general the trace of $\U$ on the $t$-axis is not $\U_l$, 
\begin{equation}
\label{eq:trace}
     \bar \U \dot{=} \lim_{x \to 0^+} \U(t, x ) \neq \U_l.   
\end{equation}
More precisely, the relation between $\U_b$ and $\bar \U$ is the following: there is a function 
$\W: [0, + \infty[ \to \R^m$ satisfying 
   \begin{equation}
\label{eq:bl}
       \left\{
\begin{array}{lll}
             \B \dot{\W} = \A ( \U - \bar \U) \\
             W (0) = \U_l, \qquad
             \lim\limits_{y \to + \infty} \W(y) = \bar \U,
\end{array}
       \right.
\end{equation} 
where we denote by $\dot{\W}$ the first derivative of $\W(y)$. A function $\W$ satisfying~\eqref{eq:bl} is called a
 \emph{boundary layer}. 

Under the assumption~\eqref{eq:ncharb} and in the case when $\B$ is the identity, system~\eqref{eq:bl} admits a solution $W$ if and only if 
$$
  (\U_l - \bar \U ) \in \text{span} \langle R_1, \dots, R_k \rangle. 
$$    
We recall that by~\eqref{eq:ncharb}, $k$ is the number of negative eigenvalues 
of $\A$ and as in Section~\ref{s:linrie}, we denote by $R_1, \dots, R_k$ the corresponding eigenvectors. In general, when the matrix 
$\B$ is invertible, the system~\eqref{eq:bl} admits a solution if and only if $\U_l - \bar \U$ belongs to the stable space of 
						$\B^{-1} \A$ (i.e., to the subspace of $\mathbb R^m$ generated by the generalized eigenvectors associated to the eigenvalues of $\B^{-1} \A$ with strictly negative real part). Note that this space \emph{depends} on the matrix $\B$: this is the reason why, even in the simplest possible case
(linear system with an invertible viscosity matrix), the limit $\epsilon \to 0^+$ of~\eqref{eq:linv},~\eqref{eq:linbrie} 
\emph{depends}  on the choice of $\B$. 

In the case when $\B$ is not invertible, the analysis in Bianchini and 
Spinolo~\cite[Sections 4.2,4.3]{BIASPI:ARMA} guarantees that, in physical cases, if the initial-boundary 
value problem~\eqref{eq:linv},\eqref{eq:linbrie} is well-posed, then there are $k$ linearly independent vectors 
$\tilde R_1, \dots , \tilde R_k$ such that the following two properties hold: first, system~\eqref{eq:bl} admits a solution if and only if
\begin{equation}
 \label{eq:stable}
       ( \U_l - \bar \U ) \in \text{span} \langle \tilde  R_1, \dots, \tilde R_k \rangle.
\end{equation}
Second, the vectors $\tilde R_1, \dots , \tilde R_k, R_{k+1}, \dots , R_m$ constitute a basis of $\R^m$. 
Specific examples with explicit constructions of the vectors $\tilde R_1, \dots , \tilde R_k$  are 
discussed later. 

Consider the linear system 
\begin{equation}
 \label{eq:solverbrie}
         \U_l + \sum_{i=1}^k \alpha_i \tilde R_i +  
\sum_{i=k+1}^m \alpha_i R_i = \U_0,
\end{equation}
which by the second property of the vectors $\tilde R_1, \dots , \tilde R_k$ 
admits a unique solution $(\alpha_1, \dots, \alpha_m)$. The solution $\U$ obtained by taking the limit $\epsilon \to 0^+$
of~\eqref{eq:linv},\eqref{eq:linbrie} is then 
\begin{equation}
\label{eq:briesolver}
  \U (t, x) =
  \left\{
  \begin{array}{lll}
         \displaystyle U_b + \sum_{i=1}^k \alpha_i \tilde R_i & \text{if $0< x < \lambda_k t$} & \\
         \displaystyle U_b + \sum_{i=1}^k \alpha_i \tilde R_i + \sum_{i=k+1}^j \alpha_i R_i  
          &
         \text{if $\lambda_j t < x < \lambda_{j+1} t$}, & j =k+1, \dots, m-1  \\
         U_0   & \text{if $x > \lambda_m t$} &, \\
  \end{array} 
  \right.
\end{equation}  
where as usual $\lambda_1, \dots, \lambda_m$ denote the eigenvalues of the matrix $\A$. 
Note that this construction also works in the case when the matrix $\B$ is the identity provided that we set 
\begin{equation}
\label{eq:tilder}
\tilde R_i \dot{=} R_i \qquad \forall \, i=1, \dots, k.
\end{equation} 
\subsection{Explicit computations for the linearized shallow water equations}
\label{s:lish}
The above constructions are fairly general and abstract. We illustrate them by an example, i.e. the linearized shallow water equations of fluid flow (see LeVeque~\cite{LEV1}):
\begin{equation}
\label{eq:lsws}
\begin{aligned}
h_t + \widetilde{u} h_x + \widetilde{h} u_x &= 0, \\
u_t + g h_x + \widetilde{u} u_x &= 0.
\end{aligned}
\end{equation} 
Here, the height is denoted by $h$ and water velocity by $u$. The constant $g$ stands for the acceleration due to gravity and  $\widetilde{h},\widetilde{u}$ are the (constant) height and velocity states around which the shallow water
equations are linearized.

The physically relevant viscosity mechanism for the shallow water system is the \emph{eddy viscosity}. Adding eddy viscosity to the linearized shallow water system results in the following mixed hyperbolic-parabolic system:
\begin{equation}
\label{eq:lswsed}
\begin{aligned}
h_t + \widetilde{u} h_x + \widetilde{h} u_x &= 0,\\
u_t + g h_x + \widetilde{u} u_x &= \epsilon u_{xx}.
\end{aligned}
\end{equation}

For the sake of comparison, we add an \emph{artificial} viscosity to the linearized shallow waters by including the Laplacian. The resulting parabolic system is
\begin{equation}
\label{eq:lswslap}
\begin{aligned}
h_t + \widetilde{u} h_x + \widetilde{h} u_x &= \epsilon h_{xx}, \\
u_t + g h_x + \widetilde{u} u_x &= \epsilon u_{xx}.
\end{aligned}
\end{equation}

Systems~\eqref{eq:lswslap} and~\eqref{eq:lswsed} can be written in the form~\eqref{eq:linv} provided that
\begin{equation}
\label{eq:viscsw}
\A = \left(
\begin{array}{cc}
       \tilde u & \tilde h \\
       g & \tilde u  
\end{array}
\right), \quad 
\B =\B^{Lap} = \left(
\begin{array}{cc}
       1 & 0 \\
       0 & 1  
\end{array}
\right), \qquad 
\B= \B^{EDvisc}= \left(
\begin{array}{cc}
       0 & 0 \\
       0 & 1  
\end{array}
\right)
\end{equation}
in ~\eqref{eq:lswslap} and~\eqref{eq:lswsed}, respectively. We will construct explicit solutions for the linearized shallow water equations \eqref{eq:lsws} for the limit of both the eddy viscosity as well as the artificial viscosity. For the rest of this section, we specify the parameters
\begin{equation}
\label{eq:lsw:wtl}
\widetilde{h} = 2,\quad \widetilde{u} = 1, \quad g = 1.
\end{equation}
and consider the initial data
\begin{equation}
\label{eq:lswinit}
(h,u) (x, 0) = \begin{cases}
                     U^- = (3,1), &{\rm if} \quad x < 0, \\
                     U^+= (1,1), &{\rm if} \quad x > 0.
                     \end{cases}
\end{equation}
and the Dirichlet boundary data 
\begin{equation}
\label{eq:ldir}
   (h, u) (-1, t) = U_l (t) =(2,  1) \quad \forall \, t > 0.
\end{equation}

\subsubsection{Solution of the Riemann problem}
\label{s:riepnb}
We now apply the construction described in Section~\ref{s:linrie} to solve the Riemann problem~\eqref{eq:lsws},~\eqref{eq:lswinit}. 
The eigenvalues of the matrix $\A$ in~\eqref{eq:viscsw} are $\lambda_1 = 1 - \sqrt{2} < 0$ and $\lambda_2 = 1 + \sqrt{2} > 0$, 
with corresponding eigenvectors 
\begin{equation}
 \label{eq:r1} 
   R_1 = 
   \left(
\begin{array}{cc}
       1  \\
       - \sqrt{2}/ 2  \\  
\end{array}
\right) \qquad 
  R_2 = \left(
\begin{array}{cc}
       1  \\
       \sqrt{2}/ 2.  \\  
\end{array}
\right).
\end{equation}
Hence, the solution of the linear system~\eqref{eq:linsyst} in this case is $\alpha_1= \alpha_2= -1$.
\subsubsection{Solution of the boundary Riemann problem limit of the uniform (Laplacian) viscosity}
\label{s:lswe}
We now apply the construction in Section~\ref{s:blinrie} to determine the limit $\epsilon \to 0^+$ of the viscous 
approximation~\eqref{eq:lswslap} coupled with the Cauchy
and Dirichlet data~\eqref{eq:lswinit} and~\eqref{eq:ldir}. 

We solve the linear system~\eqref{eq:solverbrie} in the case when $\U_l$ is given by~\eqref{eq:ldir}, $U_0 = (3, 1)$ 
and~\eqref{eq:tilder} holds and we obtain $\alpha_1 = \alpha_2 = 1/2$.

By combining this with the analysis in Section~\ref{s:riepnb} and by recalling~\eqref{eq:riesolver} and~\eqref{eq:briesolver}, 
we conclude 
that the local in time solution
of~\eqref{eq:lsws} obtained by taking the limit $\epsilon \to 0^+$ of~\eqref{eq:lswslap},~\eqref{eq:lswinit},~\eqref{eq:ldir} is   
\begin{equation}
 \label{eq:lswriep}
(h, u) (t, x) =
  \left\{
  \begin{array}{lll}
          (5/2, 1- \sqrt{2}/4) & \text{if $0 < x + 1 < (1 + \sqrt{2} ) t$} \\
         (3, 1) & \text{if $x + 1 > (1 + \sqrt{2} ) t$ and $x < (1 - \sqrt{2} ) t$} \\
         (2, 1 + \sqrt{2}/2)  & \text{if $ (1 - \sqrt{2} ) t  < x < (1 + \sqrt{2} ) t$} \\
         (1, 1)  & \text{if $x > (1 + \sqrt{2} ) t$} \\
  \end{array}
  \right. 
\end{equation}
\subsubsection{Solution of the boundary Riemann problem limit of the eddy viscosity}
\label{s:lev}
We evaluate the limit $\varepsilon \to 0^+$ of the viscous approximation~\eqref{eq:lswsed} coupled with the Cauchy
and Dirichlet data~\eqref{eq:lswinit} and~\eqref{eq:ldir} by applying the construction described in Section~\ref{s:blinrie}. 
We consider system~\eqref{eq:bl} in the case when $\B$ is the same matrix 
$\B^{EDvisc}$ as in~\eqref{eq:viscsw} and $U_l$ is given by~\eqref{eq:ldir}
and we get 
\begin{equation}
\label{eq:lswev}
\left\{
\begin{array}{ll}
          0 = h - \bar h + 2 (u - \bar u ) \\
          \dot{u} = - (u - \bar u) \\
           (h, u) (0) = (2, 1) \qquad \lim_{y \to + \infty} (h, u) (y) = (\bar h, \bar u).
 \end{array}
\right.
\end{equation}
By imposing the initial datum $h (0)=2$, $u(0)=1$ one gets that~\eqref{eq:lswev} admits a solution if and only if 
$$
\left(
  \begin{array}{cc}
         2- \bar h \\
         1- \bar u \\
  \end{array}
\right) =
\text{span} \langle \tilde R_1 \rangle  \qquad \tilde R_1 \dot{=} \left(
  \begin{array}{cc}
       1,
       -1/2 \\
  \end{array}
\right)
$$
and hence by solving the linear system~\eqref{eq:solverbrie} we get in this case $\alpha_1 = \sqrt{2} / (\sqrt{2} +1)$
and $\alpha_2 = 1 / (\sqrt{2} +1)$. By combining this with the analysis in Section~\ref{s:riepnb} and by recalling~\eqref{eq:riesolver} and~\eqref{eq:briesolver} 
we conclude 
that the local in time solution
of~\eqref{eq:lsws} obtained by taking the limit $\epsilon \to 0^+$ of~\eqref{eq:lswsed},~\eqref{eq:lswinit},~\eqref{eq:ldir} is   
\begin{equation}
\label{eq:lswsolev}
(h, u) (t, x) =
  \left\{
  \begin{array}{lll}
       \displaystyle
         \left(
          (3 \sqrt{2}+2)/(\sqrt{2}+1), 
          (\sqrt{2}+2)/(2 \sqrt{2}+2) 
          \right) & \text{if $0 < x + 1 < (1 + \sqrt{2} ) t$} \\
         (3, 1) & \text{if $x + 1 > (1 + \sqrt{2} ) t$ and $x < (1 - \sqrt{2} ) t$}
 \\
         (2, 1 + \sqrt{2}/2)  & \text{if $ (1 - \sqrt{2} ) t  < x < (1 + \sqrt{2} ) t$} 
\\
         (1, 1)  & \text{if $x > (1 + \sqrt{2} ) t$} 
\\
  \end{array}
  \right. 
\end{equation}
The explicit calculations clearly show that the solution of the linearized shallow water equations \eqref{eq:lsws} realized as a limit of vanishing eddy viscosity \eqref{eq:lswsed} \emph{differs} from the solution realized as a limit of the artificial viscosity \eqref{eq:lswslap}. In particular, the solutions are different near the boundary at $x=0$ whereas they are the same, away from the boundary. The height ($h$) for both solutions is shown in figure \ref{fig:1}, left. 

\subsection{The solution of the boundary Riemann problem for non-linear systems.} 
\label{s:briep}
Consider the boundary Riemann problem obtained by coupling the mixed hyperbolic-parabolic system \eqref{eq:vcl}
with the Cauchy and Dirichlet data $ \U^{\epsilon}(0, x) =\U_0$, $ \U^{\epsilon}(t, 0)= \U_l$, respectively, 
 where $\U_0, \U_l \in \R^m$, and by then taking the limit $\epsilon \to 0^+$. 
One of the main challenges posed by this problem is establishing the relation between the data $\U_l$ and 
the trace 
$$
  \bar \U \dot{=} \lim_{x \to 0^+} \U(t, x). 
$$
As pointed out by Gisclon and Serre~\cite{G1,GS1}, there exists a \emph{boundary layer} ${\W: [0, +  \infty[ \to \R^m}$ such that
\begin{equation}
\label{eq:blnl}
\left\{
\begin{array}{lll}
 \B (W) \dot{W} = \F(\W) - \F(\bar U) \\
\W(0) = \U_l \qquad \lim_{y \to + \infty } \W(y) = \bar \U.
\end{array}
\right.
\end{equation}
In the case when the Jacobian matrix $\F_{\U}$ satisfies~\eqref{eq:ncharb} and the matrix $\B$ is invertible, the existence of 
a boundary layer $\W$ satisfying~\eqref{eq:blnl} is equivalent to the fact that $\U_l$ belongs to a 
suitable stable manifold centered at $\bar \U$. The general case when condition~\eqref{eq:ncharb} is violated (i.e., one eigenvalue of the Jacobian matrix $\F_\U$ can attain the value $0$) or the viscosity 
matrix $\B$ is singular is more complicated, but it can be treated under suitable assumptions.
A characterization of the solution of the boundary Riemann problem obtained as limit of 
$\varepsilon \to 0^+$ of the viscous approximation~\eqref{eq:vcl} is provided in Bianchini and 
Spinolo~\cite{BIASPI:ARMA} under the assumption that the Dirichlet and boundary data are sufficiently close. 
We also refer to Joseph and LeFloch~\cite{JOLEF:ARMA}, Ancona and Bianchini~\cite{ANBIA} and to the references therein for characterizations 
of solution of the boundary Riemann problem obtained as limits of different viscous approximations. We observe that in general it is difficult to compute the solution of the boundary Riemann problem for a non-linear system explicitly.

\section{Numerical schemes.}
\label{sec:schemes}
\subsection{Definition of the schemes}
We write down a semi-discrete conservative finite difference (finite volume) scheme for the system of conservation laws \eqref{eq:cl} as
\begin{equation}
\label{eq:fds}
\frac{d}{dt} \U_j(t) + \frac{1}{\Dx}\left(\F_{j+1/2} - \F_{j-1/2}\right) = 0.
\end{equation}
Here $\U_j \approx \U(x_j)$  with $x_j$ being the midpoint of the cell $[x_{j-1/2},x_{j+1/2}]$.  As stated in the introduction, the numerical flux $\F_{j+1/2} = \F(\U_j,\U_{j+1})$ is determined by (approximate) solutions of the Riemann problem at the interface $x_{j+1/2}$. An equivalent expression of the numerical flux (see the book by LeVeque~\cite{LEV1}) is 
\begin{equation}
\label{eq:nf1}
\F_{j+1/2} = \frac{\F(\U_j) + \F(\U_{j+1})}{2} -\frac{1}{2}\widehat{\D}_{j+1/2},
\end{equation}
with $\widehat{\D} = \widehat{\D}(\U_j,\U_{j+1})$ being the corresponding \emph{numerical diffusion} operator. As an example, the Roe diffusion operator (see again~\cite{LEV1}) is given by
\begin{equation}
\label{eq:roe}
\widehat{\D}_{j+1/2} = R_{j+1/2} |\Lambda_{j+1/2}| R^{-1}_{j+1/2} \jmp{\U}_{j+1/2},
\end{equation}
where $\Lambda$ and $R$ are matrix of eigenvalues and eigenvectors of the Jacobian $\F_{\U}$ evaluated at a suitable average state. Also, here and in the following we use the notations
$$
\overline{a}_{j+1/2} = \frac{a_j + a_{j+1}}{2}, \quad \jmp{a}_{j+1/2} = a_{j+1} - a_j.
$$

Clearly, the above numerical diffusion operator does not incorporate any information about the underlying viscous approximation. 
Hence, the approximate solutions generated by schemes such as the Roe scheme may not converge to the physically relevant 
solution, given as a limit of the underlying viscous approximation as the viscosity parameter goes to zero. An illustration is provided in figure \ref{fig:1}, right. Here, we present results obtained by approximating the linearized shallow water system \eqref{eq:lsws} with the Roe (Godunov) scheme. The exact solution, computed in \eqref{eq:lswsolev} as the limit of vanishing eddy viscosity \eqref{eq:lswsed} is also shown on the left. As shown in the figure, the Roe scheme does not converge to this physically relevant solution as the numerical viscosity \eqref{eq:roe} is very different from the eddy viscosity in \eqref{eq:lswsed}.

In this paper, we consider a different paradigm for numerically approximating the initial-boundary value problem. The main difference from the standard schemes lies in the choice of the numerical flux $\F$ in \eqref{eq:fds}. It combines the following ingredients.
\subsubsection{Entropy conservative fluxes:} We assume that~\eqref{eq:cl} admits an entropy-entropy flux pair $(S, Q)$ and by following Tadmor~\cite{TAD1} we define an entropy conservative flux for \eqref{eq:cl} as 
\begin{definition}
A numerical flux $\F^{\ast}_{j+1/2} = \F^{\ast}(\U_j,\U_{j+1})$ 
is defined to be entropy conservative for entropy $S$ if it satisfies 
\begin{equation}
\label{eq:ecf}
\jmp{\V}^{\top}_{j+1/2} \F^{\ast}_{j+1/2} = \jmp{\Psi}_{j+1/2}
\end{equation}
for every $j$. Here, $\V = S_{\U}$ is the vector of entropy variables and $\Psi = \V^{\top} \F - Q$ is the entropy potential for the entropy function $S$ and entropy flux  $Q$. 
\end{definition}
The existence of entropy conservative fluxes for system of conservation laws is shown in Tadmor~\cite{TAD1} and explicit examples of entropy conservative fluxes are summarized in Fjordholm, Mishra and Tadmor~\cite{FMT4}. 

{\color{black} It is known (see Dafermos~\cite[Chapter 7]{DAF1}) that if $(S, Q)$ is an entropy-entropy flux pair $(S, Q)$ with $S$ strictly convex,  then $\F_U \U_V$ is symmetric and $\U_\V$ is symmetric and positive definite. In the following, we also assume that the entropy is dissipative, namely that
\begin{equation}
\label{eq:dissen}
         \langle \B \U_\V\xi, \xi \rangle \ge 0 \quad \forall \;  \xi \in \R^m. 
\end{equation} 
This condition is satisfied in physical cases. In particular, it is satisfied in all the cases we discuss in the following.}
\subsubsection{Numerical diffusion operator:}
Let \eqref{eq:vcl} be the underlying mixed hyperbolic-parabolic regularization of the hyperbolic equation \eqref{eq:cl}. We choose a \emph{numerical diffusion} operator,
\begin{equation}
\label{eq:ndiff}
\D^{\ast}_{j+1/2} := \D^{\ast}(\U_j,\U_{j+1}) = c_{\max} \B(\hat{\U}_{j+1/2})\jmp{\U}_{j+1/2}.
\end{equation}
Here, $\B$ is the viscosity matrix in the parabolic regularization \eqref{eq:vcl} evaluated at some suitable averaged state $\hat{U}_{j+1/2}$ and 
\begin{equation}
\label{eq:maxe}
c_{\max} (t) = \max\limits_{j} |\lambda^{\max}_j|,
\end{equation}
with $\lambda_j^{\max}$ being the largest eigenvalue of the Jacobian $\F_\U$ at a given state $\U_j$. 
\subsubsection{Correct numerical diffusion (CND) scheme.}
We choose the numerical flux
\begin{equation}
\label{eq:nf2}
\F_{j+1/2} = \F^{\ast}_{j+1/2} - \frac{1}{2} \D^{\ast}_{j+1/2}.
\end{equation}
Here, $\F^{\ast}_{j+1/2} = \F^{\ast}\left(\U_j,\U_{j+1}\right)$ is an entropy conservative flux \eqref{eq:ecf} for the system \eqref{eq:cl} and the numerical diffusion operator $\D^{\ast}_{j+1/2}$ is defined in \eqref{eq:ndiff}. The semi-discrete scheme \eqref{eq:fds} with numerical flux \eqref{eq:nf2} has the following properties:
\begin{theorem}
\label{theo:1}
Assume that the system \eqref{eq:cl} is equipped with the entropy-entropy flux pair $(S, Q)$ which is {\color{black}dissipative in the sense of~\eqref{eq:dissen}.} Then, the scheme \eqref{eq:fds} with numerical flux \eqref{eq:nf2} satisfies
\begin{itemize}
\item [(i.)] a (local) discrete entropy inequality (discrete version of the entropy inequality \eqref{eq:ent}) of the form
\begin{equation}
\label{eq:de1}
\frac{d}{dt} S(\U_j) (t) + \frac{1}{\Dx}\left(\hat{Q}_{j+1/2} - \hat{Q}_{j+1/2}\right) \leq 0,
\end{equation}
with a numerical entropy flux $\hat{Q}$ that is consistent with the entropy flux $Q$.
\item [(ii.)] The scheme is first-order accurate and the equivalent equation is 
\begin{equation}
\label{eq:ee1}
\U^{\Dx}_t + \F(\U^{\Dx})_x = \frac{c_{\max}\Dx}{2}\left(\B(\U^{\Dx}) \U^{\Dx}_x\right)_x + \mathcal{O}(\Dx^2).
\end{equation}
\end{itemize}
\end{theorem}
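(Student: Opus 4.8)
The plan is to prove the two assertions separately: part (i) by a discrete manipulation in entropy variables following Tadmor~\cite{TAD1}, and part (ii) by a modified-equation (Taylor) expansion. For (i), I would first multiply the semi-discrete scheme \eqref{eq:fds} on the left by the entropy variable $\V_j^{\top} = S_{\U}(\U_j)^{\top}$ and use the chain rule $\V_j^{\top}\tfrac{d}{dt}\U_j = \tfrac{d}{dt}S(\U_j)$, so that everything reduces to controlling $\V_j^{\top}(\F_{j+1/2}-\F_{j-1/2})$. Splitting $\F_{j+1/2} = \F^{\ast}_{j+1/2} - \tfrac12\D^{\ast}_{j+1/2}$, I would treat the entropy-conservative part using the defining relation \eqref{eq:ecf}: setting $Q^{\ast}_{j+1/2} := \overline{\V}_{j+1/2}^{\top}\F^{\ast}_{j+1/2} - \overline{\Psi}_{j+1/2}$, relation \eqref{eq:ecf} yields the two-sided identity $Q^{\ast}_{j+1/2} = \V_j^{\top}\F^{\ast}_{j+1/2} - \Psi_j = \V_{j+1}^{\top}\F^{\ast}_{j+1/2} - \Psi_{j+1}$, and subtracting the two incident interfaces gives $\V_j^{\top}(\F^{\ast}_{j+1/2}-\F^{\ast}_{j-1/2}) = Q^{\ast}_{j+1/2}-Q^{\ast}_{j-1/2}$ exactly, i.e. discrete entropy conservation for the entropy-conservative part.

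Next I would define the full numerical entropy flux $\hat{Q}_{j+1/2} := Q^{\ast}_{j+1/2} - \tfrac12\overline{\V}_{j+1/2}^{\top}\D^{\ast}_{j+1/2}$, which is consistent with $Q$ because $\D^{\ast}$ vanishes and $\F^{\ast}(\U,\U)=\F(\U)$ when the two states coincide. A short computation using $\overline{\V}_{j+1/2}-\V_j = \tfrac12\jmp{\V}_{j+1/2}$ and $\overline{\V}_{j-1/2}-\V_j = -\tfrac12\jmp{\V}_{j-1/2}$ then produces the balance
\[
\frac{d}{dt}S(\U_j) + \frac{1}{\Dx}\left(\hat{Q}_{j+1/2} - \hat{Q}_{j-1/2}\right) = -\frac{1}{4\Dx}\left(\jmp{\V}_{j+1/2}^{\top}\D^{\ast}_{j+1/2} + \jmp{\V}_{j-1/2}^{\top}\D^{\ast}_{j-1/2}\right).
\]
It then remains to show each interface term is non-negative. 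Writing $\D^{\ast}_{j+1/2} = c_{\max}\B(\hat{\U}_{j+1/2})\jmp{\U}_{j+1/2}$ and using the change of variables $\jmp{\U}_{j+1/2} = A_{j+1/2}\,\jmp{\V}_{j+1/2}$ with $A_{j+1/2} := \int_0^1 \U_\V\,ds$ the averaged Jacobian along the segment joining $\V_j$ to $\V_{j+1}$ (symmetric positive definite, since $\U_\V$ is), the dissipation becomes $c_{\max}\,\jmp{\V}_{j+1/2}^{\top}\B A_{j+1/2}\jmp{\V}_{j+1/2}$, which is $\geq 0$ by the dissipativity assumption \eqref{eq:dissen} applied with $\xi = \jmp{\V}_{j+1/2}$; since $c_{\max}\geq 0$, the right-hand side above is non-positive, establishing \eqref{eq:de1}.

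For part (ii) I would assume $\U$ smooth and expand about $x_j$. Since the entropy-conservative flux is second-order accurate (Tadmor~\cite{TAD1}), one has $\tfrac{1}{\Dx}(\F^{\ast}_{j+1/2}-\F^{\ast}_{j-1/2}) = \F(\U)_x + \mathcal{O}(\Dx^2)$. For the diffusion, $\jmp{\U}_{j+1/2} = \Dx\,\U_x + \mathcal{O}(\Dx^2)$ and $\hat{\U}_{j+1/2} = \U + \mathcal{O}(\Dx)$ give $\D^{\ast}_{j+1/2} = \Dx\,c_{\max}\B(\U)\U_x + \mathcal{O}(\Dx^2)$, hence $\tfrac{1}{2\Dx}(\D^{\ast}_{j+1/2}-\D^{\ast}_{j-1/2}) = \tfrac{c_{\max}\Dx}{2}(\B(\U)\U_x)_x + \mathcal{O}(\Dx^2)$. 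Substituting both expansions into \eqref{eq:fds} yields the equivalent equation \eqref{eq:ee1}, and as the leading residual is $\mathcal{O}(\Dx)$ the scheme is first-order accurate.

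The step I expect to be the main obstacle is the sharp non-negativity of the interface dissipation for a genuinely state-dependent viscosity matrix. The hypothesis \eqref{eq:dissen} is a pointwise statement about $\B\U_\V$, whereas the discrete dissipation pairs $\B(\hat{\U}_{j+1/2})$ with the path-average $A_{j+1/2}$, and these need not combine into a positive semidefinite matrix in general. When $\B$ is constant (as in all the linear examples treated here) this is immediate, since $\B A_{j+1/2}$ is then an average of the positive semidefinite matrices $\B\U_\V$; in the nonlinear case one must either choose the averaged state $\hat{\U}_{j+1/2}$ compatibly with $A_{j+1/2}$, or read \eqref{eq:de1} as holding to leading order in $\Dx$, which is consistent with the first-order accuracy established in part (ii).
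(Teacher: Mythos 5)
Your proposal follows essentially the same route as the paper's proof: multiply the scheme by $\V_j^{\top}$, use the defining relation \eqref{eq:ecf} to turn the entropy-conservative part into a perfect difference of numerical entropy fluxes, identify the residual interface dissipation terms, and get part (ii) by Taylor expansion. Your algebra is correct, and your $\hat{Q}_{j+1/2}$ coincides with the paper's (the paper writes it as $\overline{\V}^{\top}_{j+1/2}\F^{\ast}_{j+1/2} - \overline{\Psi}_{j+1/2} - \tfrac{c_{\max}}{2}\overline{\V}^{\top}_{j+1/2}\B\jmp{\U}_{j+1/2}$, which is exactly your $Q^{\ast}_{j+1/2} - \tfrac12\overline{\V}^{\top}_{j+1/2}\D^{\ast}_{j+1/2}$). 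The one point of divergence is precisely the obstacle you flag at the end, and the paper resolves it by your first suggested fix rather than by weakening the inequality: since the definition \eqref{eq:ndiff} leaves the averaged state $\hat{\U}_{j+1/2}$ free (``some suitable averaged state''), the proof \emph{chooses} it compatibly. Concretely, the paper introduces a single mean-value state $\hat{\V}_{j+1/2}$ satisfying $\jmp{\U}_{j+1/2} = \U_{\V}(\hat{\V}_{j+1/2})\jmp{\V}_{j+1/2}$ (its equation \eqref{eq:av10}) and evaluates the viscosity matrix at $\U(\hat{\V}_{j+1/2})$, so the interface dissipation becomes $\jmp{\V}^{\top}_{j+1/2}\B(\U(\hat{\V}_{j+1/2}))\U_{\V}(\hat{\V}_{j+1/2})\jmp{\V}_{j+1/2}$ with $\B$ and $\U_{\V}$ sitting at the \emph{same} state, and \eqref{eq:dissen} applies verbatim --- no restriction to constant $\B$ and no ``leading order in $\Dx$'' reading is needed. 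Note the trade-off: your path-integral average $A_{j+1/2}=\int_0^1 \U_{\V}\,ds$ certainly exists but creates the evaluation-point mismatch, whereas the paper's single intermediate state makes \eqref{eq:dissen} apply directly but its existence (a vector-valued mean-value property) is asserted rather than proved; this is standard practice in the entropy-stable literature, but it is the price the paper pays for the clean conclusion.
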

\begin{proof}
Multiplying both sides of the scheme \eqref{eq:fds} by the entropy variable $\V_j^\top$, we obtain 
\begin{equation}
\label{eq:p1}
\begin{aligned}
\frac{d}{dt} S(\U_j) &+ \frac{1}{\Dx}\left(\widetilde{Q}_{j+1/2} - \widetilde{Q}_{j-1/2} \right) \\
&\quad = + \underbrace{\frac{1}{2\Dx}\left(\jmp{\V}^{\top}_{j+1/2}\F^{\ast}_{j+1/2} + \jmp{\V}^{\top}_{j-1/2}\F^{\ast}_{j-1/2} \right)}_{T_1} \\
&\quad - \frac{c_{max}}{4\Dx}\left(\jmp{\V}^{\top}_{j+1/2}\B(\U(\hat{\V}_{j+1/2}))\U_{\V}(\hat{\V}_{j+1/2})\jmp{\V}_{j+1/2}\right) \\
&\quad - \frac{c_{max}}{4\Dx}\left(\jmp{\V}^{\top}_{j-1/2}\B(\U(\hat{\V}_{j-1/2}))\U_{\V}(\hat{\V}_{j-1/2})\jmp{\V}_{j-1/2}\right).
\end{aligned}
\end{equation}
Here, we have introduced the numerical flux
$$
\widetilde{Q}_{j+1/2}:= \overline{\V}^{\top}_{j}\F^{\ast}_{j+1/2} - \frac{c_{max}}{2}\left( \overline{\V}^{\top}_{j+1/2} \B(\U(\hat{\V}_{j+1/2}))\jmp{\U}_{j+1/2}\right),
$$
with $\widetilde{Q}_{j-1/2}$ defined analogously. Also, we have introduced the average $\hat{\V}_{j+1/2}$ satisfying
\begin{equation}
\label{eq:av10}
\jmp{\U}_{j+1/2} = \U_{\V}(\hat{\V}_{j+1/2})\jmp{\V}_{j+1/2}.
\end{equation}
By using the definition of the entropy conservative flux \eqref{eq:ecf}, the term $T_1$ in \eqref{eq:p1} can be simplified as
{\color{black}\begin{align*}
T_1 &= \frac{1}{2\Dx}\left(\jmp{\V}^{\top}_{j+1/2}\F^{\ast}_{j+1/2} + \jmp{\V}^{\top}_{j-1/2}\F^{\ast}_{j-1/2} \right) \\
&= \frac{1}{2\Dx}\left(\jmp{\Psi}_{j+1/2} + \jmp{\Psi}_{j-1/2}\right) \\
&=  \frac{1}{\Dx}\left(\overline{\Psi}_{j+1/2} - \overline{\Psi}_{j-1/2}\right)
\end{align*}}
Substituting the above expression of $T_1$ in \eqref{eq:p1} yields
\begin{equation}
\label{eq:p2}
\begin{aligned}
\frac{d}{dt} S(\U_j) &+ \frac{1}{\Dx}\left(\hat{Q}_{j+1/2} - \hat{Q}_{j- 1/2} \right) = \\
&\quad - \frac{c_{max}}{4\Dx}\left(\jmp{\V}^{\top}_{j+1/2}\B(\U(\hat{\V}_{j+1/2}))\U_{\V}(\hat{\V}_{j+1/2})\jmp{\V}_{j+1/2}\right) \\
&\quad - \frac{c_{max}}{4\Dx}\left(\jmp{\V}^{\top}_{j-1/2}\B(\U(\hat{\V}_{j-1/2}))\U_{\V}(\hat{\V}_{j-1/2})\jmp{\V}_{j-1/2}\right).
\end{aligned}
\end{equation}
with
$$
\hat{Q}_{j+1/2}:= \overline{\V}^{\top}_{j+1/2}\F^{\ast}_{j+1/2} - \overline{\Psi}_{j+1/2} - \frac{c_{max}}{2}\left( \overline{\V}^{\top}_{j+1/2} \B(\U(\hat{\V}_{j+1/2}))\jmp{\U}_{j+1/2}\right)
$$
and $\hat{Q}_{j- 1/2}$ defined analogously. 
Note that $\hat{Q}(a,a) = Q(a)$ from the definition of the entropy potential $\Psi$. The discrete entropy inequality \eqref{eq:de1} follows from assumption~\eqref{eq:dissen}.

The equivalent equation \eqref{eq:ee1} is a simple consequence of Taylor expansion and reveals the first order accuracy of the scheme.
\end{proof}
Thus, the proposed numerical scheme is entropy stable under reasonable hypotheses on the system \eqref{eq:cl}. Furthermore, the equivalent equation \eqref{eq:ee1} shows that the numerical viscosity of this scheme matches the underlying physical viscosity operator in \eqref{eq:vcl} at leading order. Hence, we claim that the scheme \eqref{eq:fds} with numerical flux \eqref{eq:nf2} incorporates the \emph{correct numerical dissipation} and term it as the \emph{CND} scheme. 

The Dirichlet boundary conditions for \eqref{eq:cl} are imposed weakly by setting
\begin{equation}
\label{eq:bcn}
\U_0(t) = \U_l(t).
\end{equation}
This amounts to setting the Dirichlet data as the value in the ghost cell $[x_{-1/2},x_{1/2}]$.

The semi-discrete scheme \eqref{eq:fds} is integrated in time with the SSP-RK2 time integrator:
\begin{equation}
\label{eq:SSP}
\begin{aligned}
\U^{\ast} &= \U^n + \Delta t{\mathcal L}(\U^n), \\
\U^{\ast \ast} &= \U^\ast + \Delta t{\mathcal L}(\U^{\ast}), \\
\U^{n+1} &= \frac{1}{2}(\U^n + \U^{\ast \ast}),
\end{aligned}
\end{equation}
that approximates the ODE system 
\begin{equation}
\label{eq:L}
\frac{d}{dt} \U(t) = {\mathcal L}(\U(t)),
\end{equation}
for the unknowns $\U = \{\U_j\}_j$, defined by the scheme \eqref{eq:fds}.
\subsection{Linear systems:}
We illustrate the finite difference scheme \eqref{eq:fds} for a linear system, i.e for~\eqref{eq:cl} (and the parabolic regularization \eqref{eq:vcl}) with 
\begin{equation}
\label{eq:linsys}
\F(\U) = \A\U, \quad \B(\U) = \B,
\end{equation}
for $\A,\B$ given $(m\times m)$-matrices. {\color{black}As pointed out before, if $S (\U)=\frac{1}{2}\U^T \mathcal S \U$ is a strictly convex entropy, then the matrix $\mathcal S \A$ is symmetric.  Following Fjordholm, Mishra and Tadmor~\cite{FMT4}), we define the corresponding entropy conservative flux as}
\begin{equation}
\label{eq:eclin}
\F^{\ast}_{j+1/2} = \A \overline{\U}_{j+1/2}.
\end{equation}
We consider the following specific example:
\subsubsection{Linearized shallow water system.}
The linearized shallow water system \eqref{eq:lsws} is considered. We assign the data~\eqref{eq:lswinit}
and~\eqref{eq:ldir}. The computational domain is $[-1,1]$ and we use open (Neumann type) boundary conditions at the right boundary $x=1$. 

The numerical solutions computed with the standard Roe scheme~\eqref{eq:fds} and the CND scheme~\eqref{eq:nf2} are shown in figure \ref{fig:2}. As we are interested in computing the physically relevant solutions of the linearized shallow water equations, obtained as a limit of the eddy viscosity \eqref{eq:lswsed}, we also plot the exact solution computed in \eqref{eq:lswsolev} for comparison. Both the numerical solutions are computed with a $1000$ mesh points.

The results in figure \ref{fig:2} clearly show that the Roe scheme does not converge to the physically relevant solution \eqref{eq:lswsolev}. On the other hand, the solutions computed with the CND scheme approximate the physically relevant solution \eqref{eq:lswsolev} quite well. There are some small amplitude oscillations in the height with the CND scheme. This is a consequence of the singularity of the viscosity matrix $\B$ in this case. The experiment clearly shows that incorporating explicit information about the underlying viscous mechanism in the numerical diffusion operator results in the approximation of the correct solution.

\begin{figure}[htbp]
\centering
\subfigure[Height ($h$)]{\includegraphics[width=9cm]{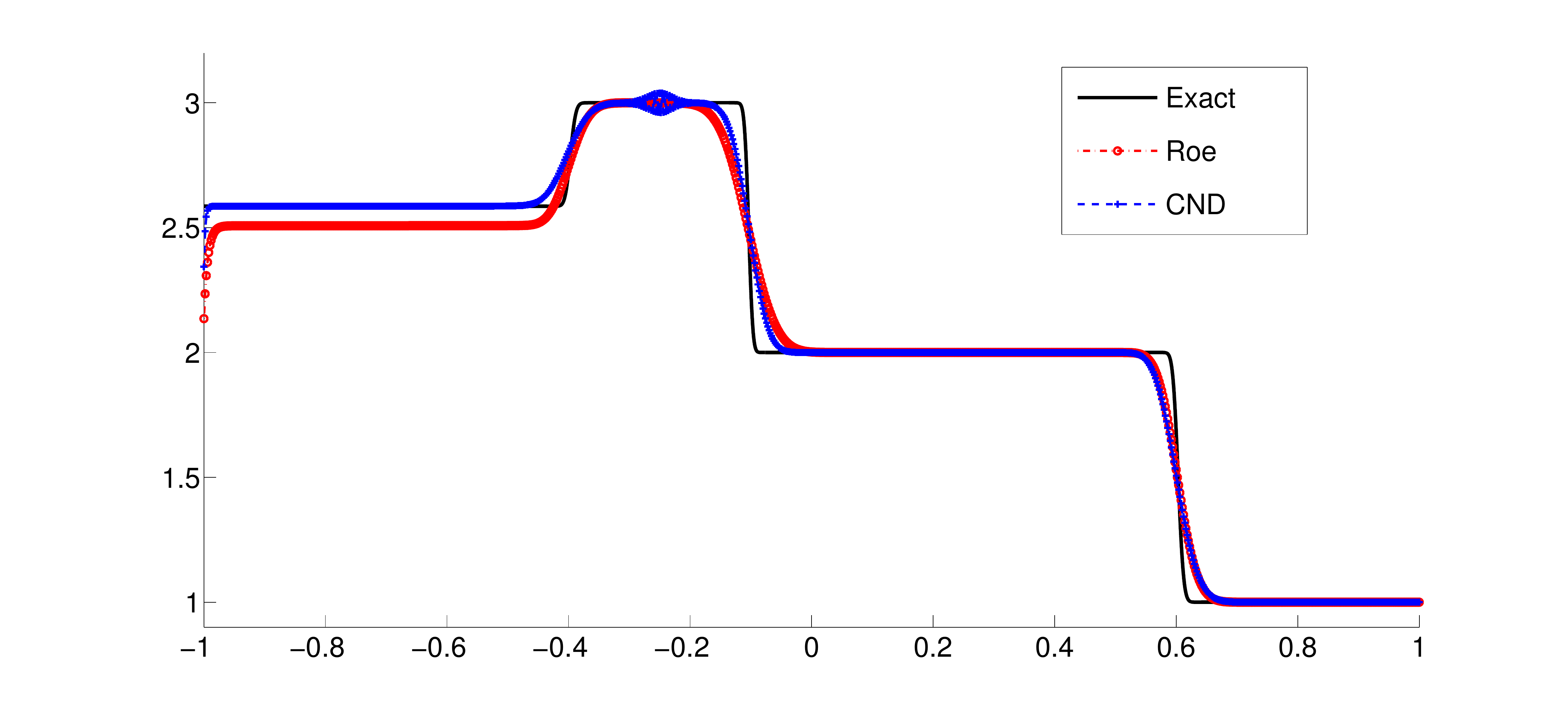}} \\
\subfigure[Velocity ($u$)]{\includegraphics[width=9cm]{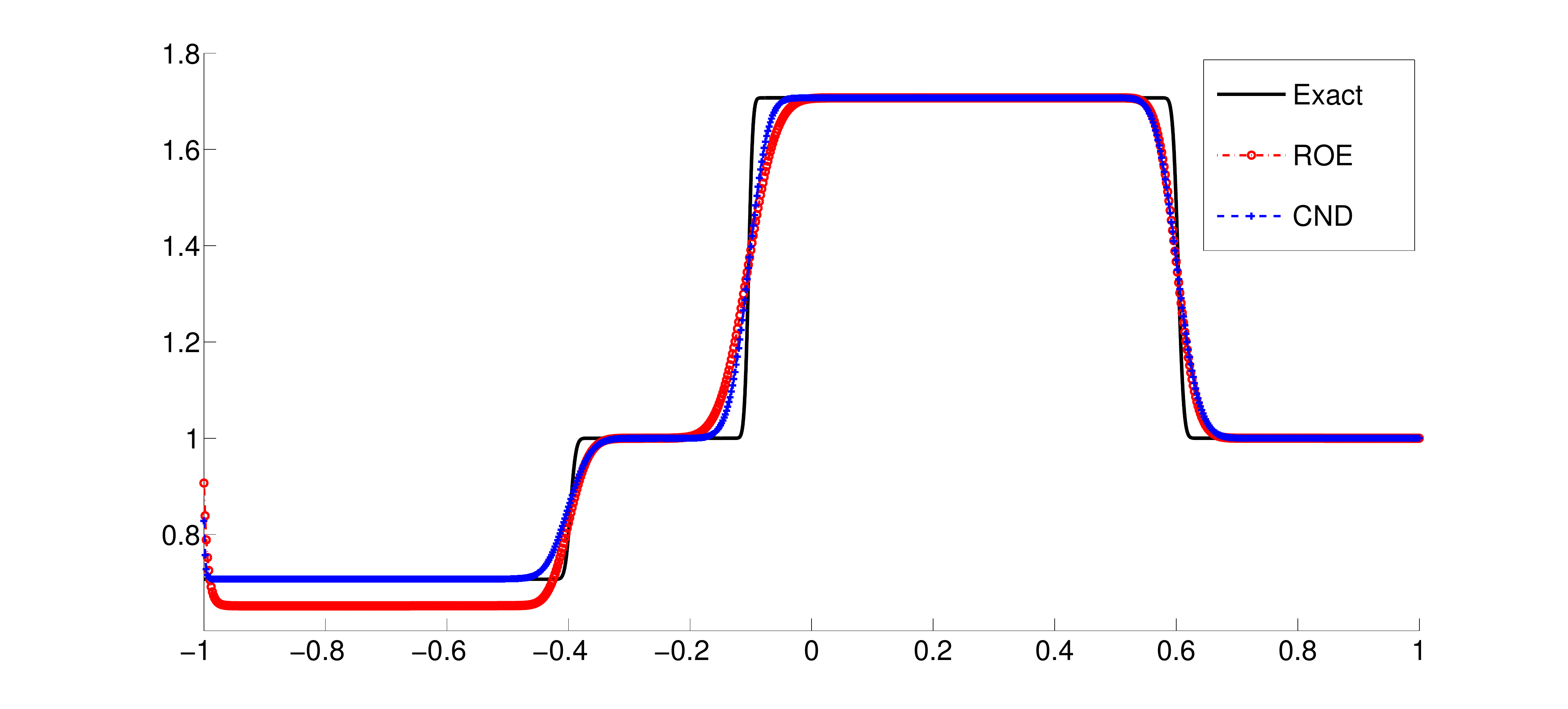}}
\caption{Solutions of the linearized shallow water equations \eqref{eq:lsws} with initial data \eqref{eq:lswinit} and boundary data \eqref{eq:ldir} computed with the Roe and CND schemes with $1000$ mesh points. The exact solution computed in \eqref{eq:lswsolev} is provided for comparison.}
\label{fig:2}
\end{figure}
\subsection{Nonlinear Euler equations}
In one space dimension, the Euler equations of gas dynamics are
\begin{equation}
\label{eq:euler}
\begin{aligned}
\rho_t + (\rho u)_x &= 0, \\
(\rho u)_t + (\rho u^2 + p)_x &= 0, \\
E_t   + ((E+p)u)_x &= 0.
\end{aligned}
\end{equation}
Here, $\rho$ is the fluid density and $u$ is the velocity. The total energy $E$ and the pressure $p$ are related by the ideal gas equation of state:
\begin{equation}
\label{eq:eos}
E = \frac{p}{\gamma -1 } + \frac{1}{2} \rho u^2,
\end{equation}
with $\gamma>1$ being a constant specific of the gas.

The system is hyperbolic with eigenvalues
\begin{equation}
\label{eq:euleig}
\lambda_1 = u - c, \quad \lambda_2 = u, \quad \lambda_3 = u +c.
\end{equation}
Here, $c = \sqrt{\frac{\gamma p}{\rho}}$ is the sound speed.

 The equations are augmented with the entropy inequality
 \begin{equation}
 \label{eq:eulent}
 \left(\frac{-\rho s}{\gamma - 1} \right)_t + \left(\frac{-\rho u s}{\gamma - 1}\right)_x \leq 0,
 \end{equation}
with thermodynamic entropy
$$
s = \log(p) - \gamma \log(\rho).
$$

The compressible Euler equations are derived by ignoring kinematic viscosity and heat conduction. Taking these small scale effects into account results in the \emph{compressible Navier-Stokes} equations:
\begin{equation}
\label{eq:cns}
\begin{aligned}
\rho_t + (\rho u)_x &= 0, \\
(\rho u)_t + (\rho u^2 + p)_x &= \nu u_{xx}, \\
E_t   + ((E+p)u)_x &= \nu \left(\frac{u^2}{2}\right)_{xx} + \kappa \theta_{xx}.
\end{aligned}
\end{equation}
Here, $\theta$ is the temperature given by
$$
\theta = \frac{p}{(\gamma-1)\rho}.
$$
The viscosity coefficient is denoted by $\nu$ and $\kappa$ is the coefficient of heat conduction.

For the sake of comparison, we add an uniform (Laplacian) diffusion to obtain the compressible Euler equations with \emph{artificial} viscosity:
\begin{equation}
\label{eq:eulerlap}
\begin{aligned}
\rho_t + (\rho u)_x &= \epsilon \rho_{xx}, \\
(\rho u)_t + (\rho u^2 + p)_x &= \epsilon (\rho u)_{xx}, \\
E_t   + ((E+p)u)_x &= \epsilon E_{xx}.
\end{aligned}
\end{equation}
To evaluate the limit solution of~\eqref{eq:cns}, we construct a numerical approximation by discretizing the mixed hyperbolic-parabolic systems \eqref{eq:cns} and \eqref{eq:eulerlap} for a fixed and very small value of the viscosity coefficient. We do so by the (semi-discrete) finite difference scheme
\begin{equation}
\label{eq:fdsv}
\frac{d}{dt} \U_j(t) + \frac{1}{\Dx}\left(\F^{\ast}_{j+1/2} - \F^{\ast}_{j-1/2}\right) = \frac{\epsilon}{\Dx^2} \D_j.
\end{equation}
Here, the numerical flux is the entropy conservative flux \eqref{eq:ecf} (see Ismail and Roe ~\cite{IR1})
\begin{equation}
\label{eq:eulerec}
\begin{aligned}
\F^{\ast}_{j+1/2}   &= [\F^{1,\ast}_{j+1/2}, \F^{2,\ast}_{j+1/2},\F^{3,\ast}_{j+1/2}]^{\top}, \\
\F^{1,\ast}_{j+1/2} &= (\overline{z_2})_{j+1/2} (z_3)^L_{j+1/2}, \quad 
\F^{2,\ast}_{j+1/2} = \frac{(\overline{z_3})_{j+1/2}}{(\overline{z_1})_{j+1/2}} + \F^{1,\ast}_{j+1/2}, \\
\F^{3,\ast}_{j+1/2} &= \frac{1}{2}\frac{(\overline{z_2})_{j+1/2}}{(\overline{z_1})_{j+1/2}}\left(\frac{\gamma +1}{\gamma -1} \frac{(z_3)^L_{j+1/2}}{(z_1)^L_{j+1/2}} + \F^{2,\ast}_{j+1/2} \right)
\end{aligned}
\end{equation}
with parameter vectors
\begin{equation}
\label{eq:param}
(z_1,z_2,z_3) = \left(\sqrt{\frac{\rho}{p}}, \sqrt{\frac{\rho}{p}} u, \sqrt{\rho p} \right).
\end{equation} 
The logarithmic mean of any quantity $a$, defined on the mesh, is denoted by
$$
(a)^L_{j+1/2} = \frac{a_{j+1} - a_j}{\log(a_{j+1}) - \log(a_j)}.
$$
We define the numerical diffusion operators by setting 
\begin{equation}
\label{eq:ev2}
\begin{aligned}
\D_j &= \left[\D^1_j,\D^2_j,\D^3_j\right]^{\top}, \\ 
\D^1_j & = 0, \\
\D^2_j  &= \nu(u_{j+1} - 2u_j + u_{j-1}), \\
\D^3_j &= \frac{\nu}{2}(u^2_{j+1} - 2u^2_j + u^2_{j-1}) + \kappa(\theta_{j+1} - 2\theta_j + \theta_{j-1}).
\end{aligned}
\end{equation}
and
\begin{equation}
\label{eq:ev1}
\D_j = \U_{j+1} - 2\U_j + \U_{j-1},
\end{equation}
for the compressible Navier-Stokes equations \eqref{eq:cns} and the Euler equations with artificial viscosity \eqref{eq:eulerlap}, respectively.

As an example, we consider both \eqref{eq:eulerlap} and \eqref{eq:cns} in the domain $[-1,1]$ with initial data
\begin{equation}
\label{eq:eulinit}
(\rho_0,u_0,p_0) = \begin{cases}
                     (3.0,1.0,3.0), &{\rm if} \; x < 0, \\
                     (1.0,1.0,1.0), &{\rm if} \; x > 0.
                     \end{cases}
\end{equation}
We impose open boundary conditions at the right boundary and Dirichlet boundary conditions at the left boundary with boundary data
\begin{equation}
\label{eq:eulbd}
\Big(\rho(-1,t),u(-1,t),p(-1,t) \Big)= (2.0,1.0,2.0).
\end{equation}
We set $\nu = \kappa = \epsilon$. The results for the finite difference scheme approximating the uniform viscosity  \eqref{eq:eulerlap} and the physical viscosity \eqref{eq:cns} are presented in Figure \ref{fig:3}. The figure shows that the there is a clear difference in the limit solutions of this problem, obtained from the compressible Navier-Stokes equations \eqref{eq:cns} and the Euler equations with artificial viscosity \eqref{eq:eulerlap}. The difference is more pronounced in the density variable near the left boundary. Both the limit solutions were computed by setting $\epsilon = 10^{-5}$ and on a very fine mesh of $32000$ points. 
\begin{figure}[htbp]
\centering
\subfigure[Density ($\rho$)]{\includegraphics[width=9cm]{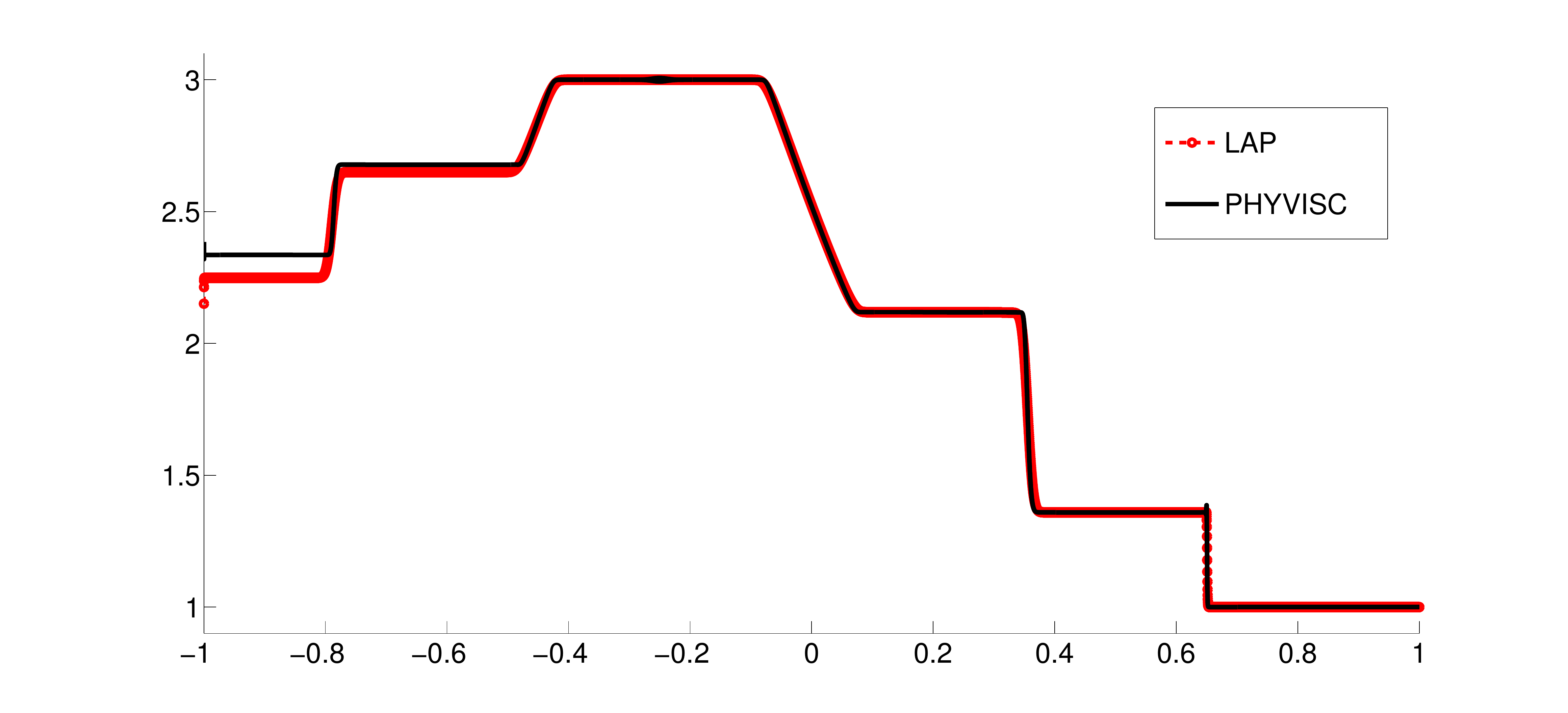}} \\
\subfigure[Velocity ($u$)]{\includegraphics[width=9cm]{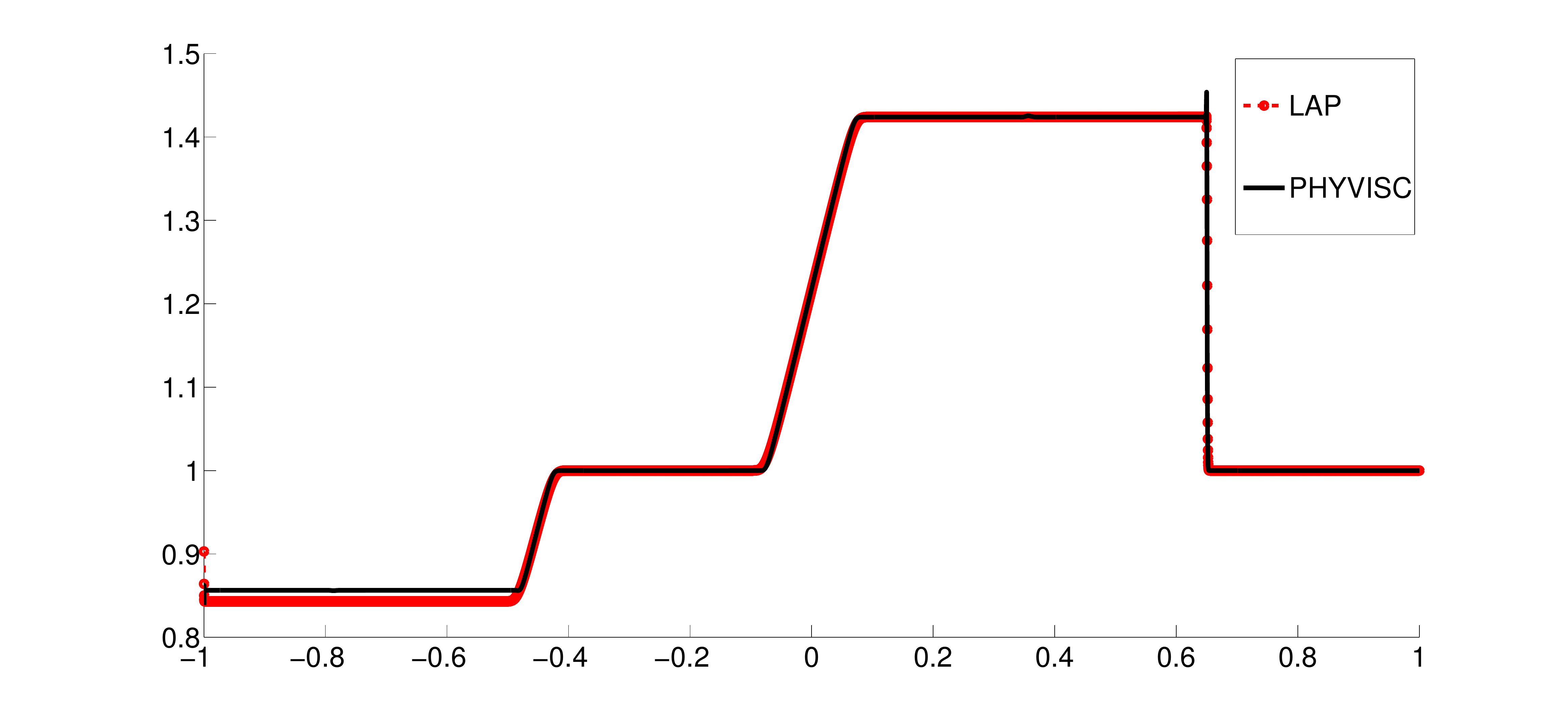}} \\
\subfigure[Pressure ($p$)]{\includegraphics[width=9cm]{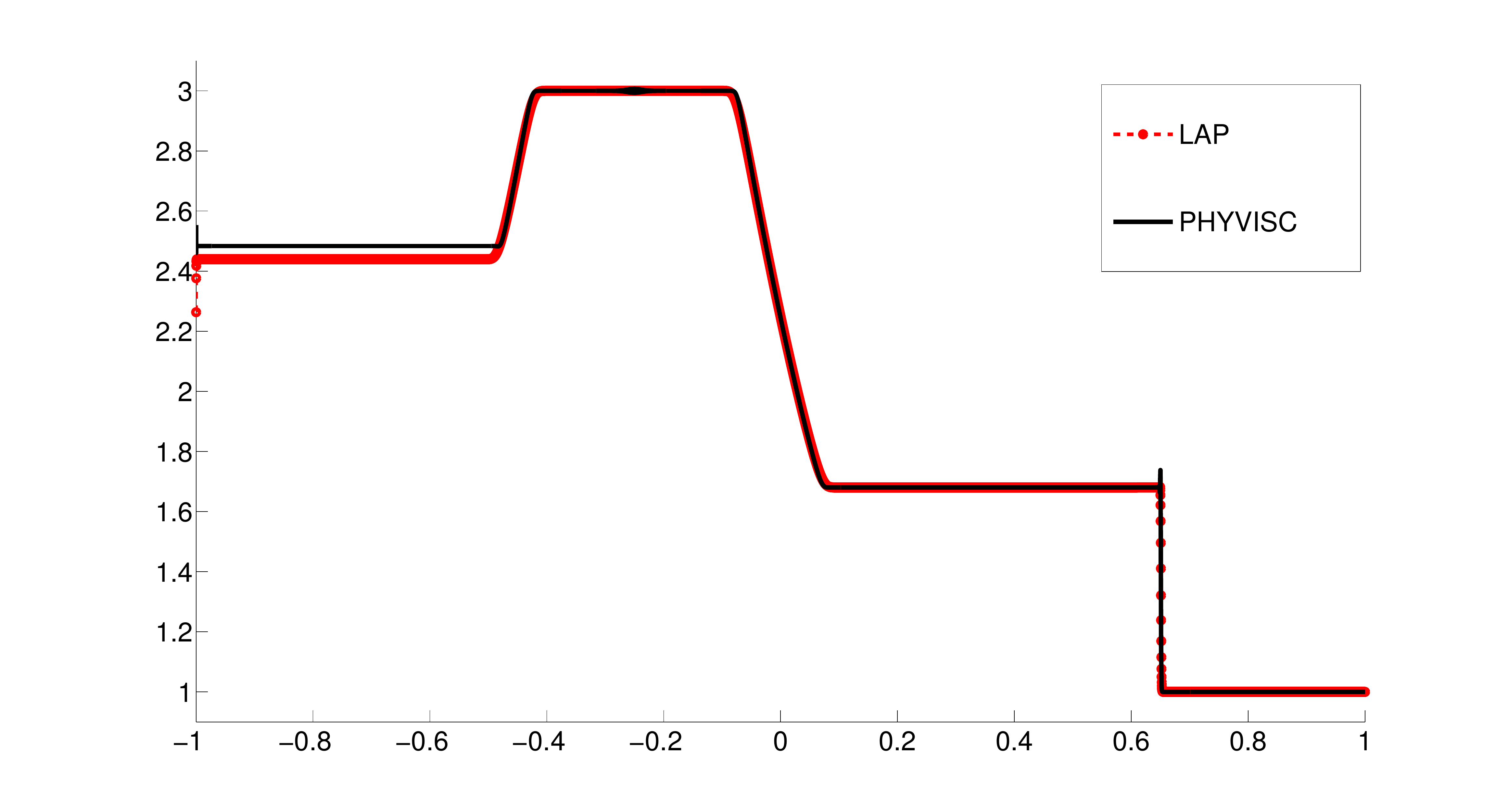}} 
\caption{Limit solutions of the compressible Euler equations \eqref{eq:euler} with initial data \eqref{eq:eulinit} and boundary data \eqref{eq:eulbd}. The limits of the physical viscosity i.e compressible Navier-Stokes equations \eqref{eq:cns} and the artificial (Laplacian) viscosity \eqref{eq:eulerlap} are compared.}
\label{fig:3}
\end{figure}
\begin{remark}
The above example also illustrates the limitations of using a mixed hyperbolic-parabolic system like the compressible Navier-Stokes equations \eqref{eq:cns}. In order to resolve the viscous scales, we need to choose $\Dx = \mathcal{O}\left(\frac{1}{\epsilon} \right)$, with $\epsilon$ being the viscosity parameter. As $\epsilon$ is very small in practice, the computational effort involved is prohibitively expensive. In the above example, we needed $32000$ points to handle $\epsilon = 10^{-5}$. Such ultra fine grids are not feasible, particularly in several space dimensions.
\end{remark}
\subsubsection{CND scheme for the Euler equations}
The CND scheme \eqref{eq:fds} for the Euler equations \eqref{eq:euler} is specified as follows: the entropy conservative flux in \eqref{eq:nf2} is given by \eqref{eq:eulerec} and the numerical diffusion operator in \eqref{eq:nf2} matches the kinematic viscosity and heat conduction of the compressible Navier-Stokes equations since it is defined by setting
\begin{equation}
\label{eq:eulernd}
\begin{aligned}
\D^{\ast}_{j+1/2} &= \left[\D^{1}_{j+1/2},\D^{2}_{j+1/2},\D^{3}_{j+1/2}\right]^{\top}, \\
\D^{1}_{j+1/2} &= 0, \\
\D^{2}_{j+1/2} &=   \left(\max\limits_{j} \left(|u_j| + \sqrt{\frac{\gamma p_j}{\rho_j}}\right)\right)\left(u_{j+1} - u_j\right), \\
D^{3}_{j+1/2} &=   \left(\max\limits_{j} \left(|u_j| + \sqrt{\frac{\gamma p_j}{\rho_j}}\right)\right)\left(\frac{1}{2}(u^2_{j+1} -u^2_j) + (\theta_{j+1} - \theta_j) \right).
\end{aligned}
\end{equation}
We discretize the initial-boundary value problem for the compressible Euler equations \eqref{eq:euler} on the computational domain $[-1,1]$ with initial data \eqref{eq:eulinit} and Dirichlet data \eqref{eq:eulbd}. The results with the CND scheme and a standard Roe scheme are shown in figure \ref{fig:4}. We present approximate solutions, computed on a mesh of $1000$ points, for both schemes. Both the Roe and the CND schemes have converged at this resolution. As we are interesting in approximating the physically relevant solutions of the Euler equations, realized as a limit of the Navier-Stokes equations, we plot a reference solution computed on a mesh of $32000$ points of the compressible Navier-Stokes equations \eqref{eq:cns} with $\kappa = \nu = 10^{-5}$. The figure shows that the Roe scheme clearly converges to an incorrect solution near the left boundary. This lack of convergence is most pronounced in the density variable. Similar results were also obtained with the standard Rusanov, HLL and HLLC solvers (see the book by LeVeque\cite{LEV1} for a detailed description of these solvers).

On the other hand, the CND scheme converges to the physically relevant solution. There are slight oscillations with the CND scheme as the numerical diffusion operator is singular. However, these oscillations do not impact on the convergence properties of this scheme. Furthermore, the CND scheme is slightly more accurate than the Roe scheme when both of them converge to the same solution (see near the interior contact).
\begin{figure}[htbp]
\centering
\subfigure[Density ($\rho$)]{\includegraphics[width=9cm]{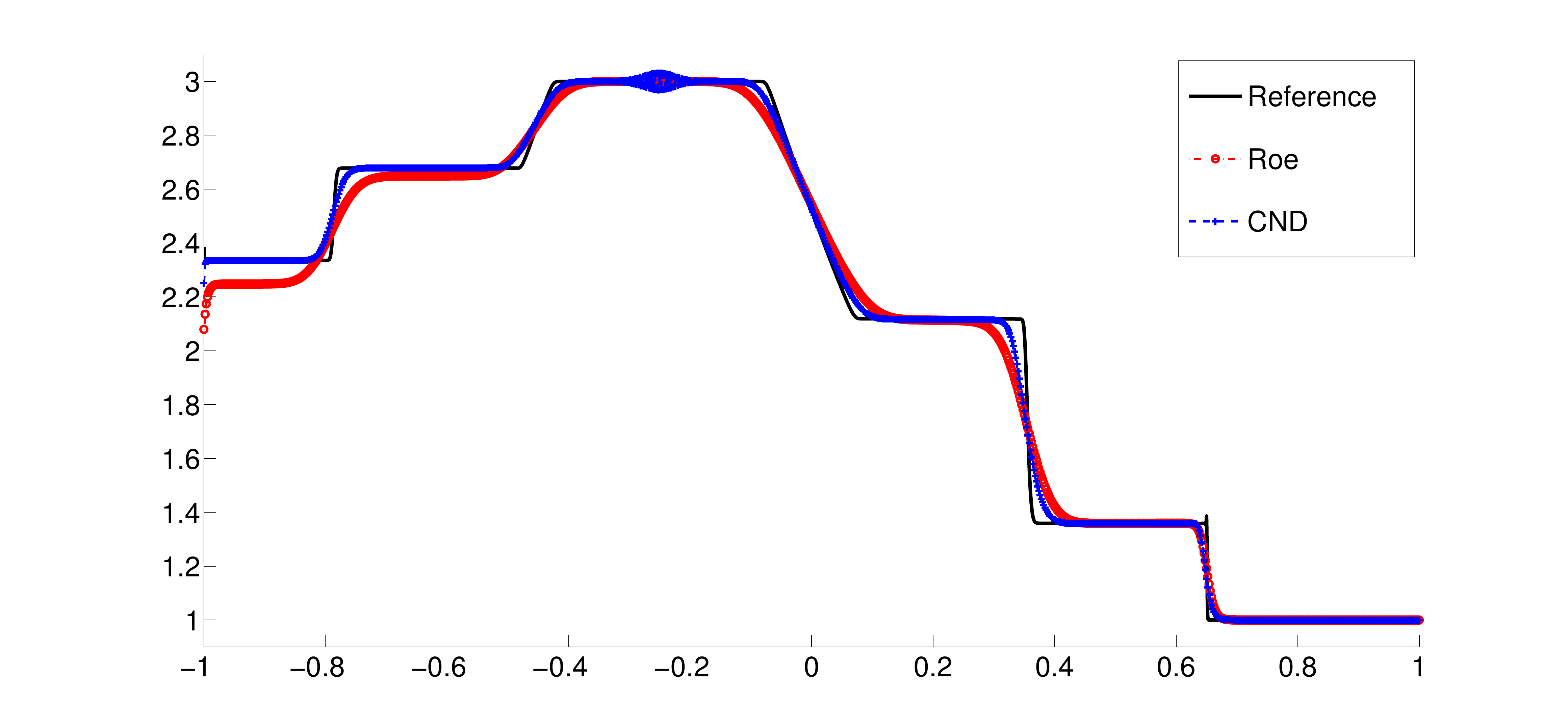}} \\
\subfigure[Velocity ($u$)]{\includegraphics[width=9cm]{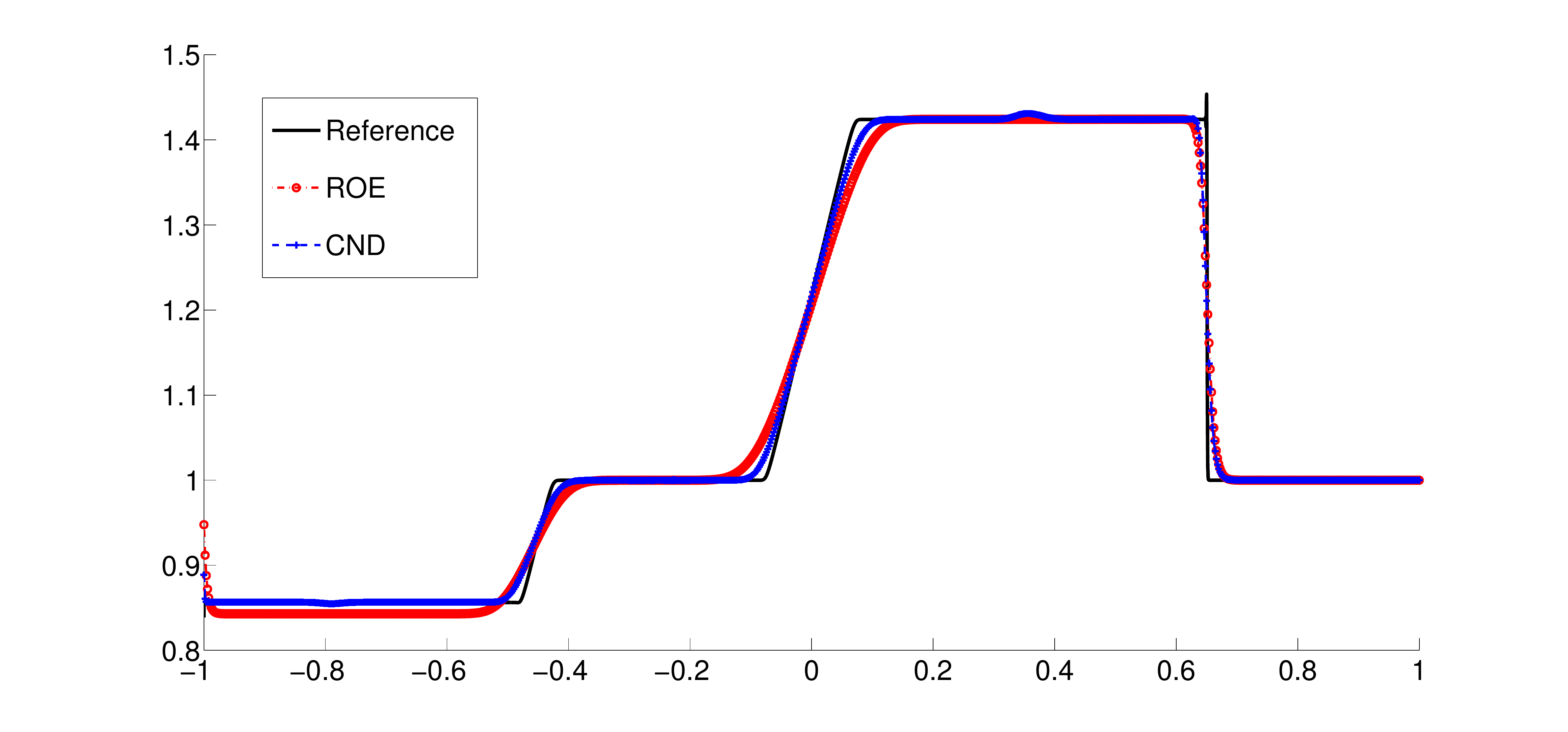}} \\
\subfigure[Pressure ($p$)]{\includegraphics[width=9cm]{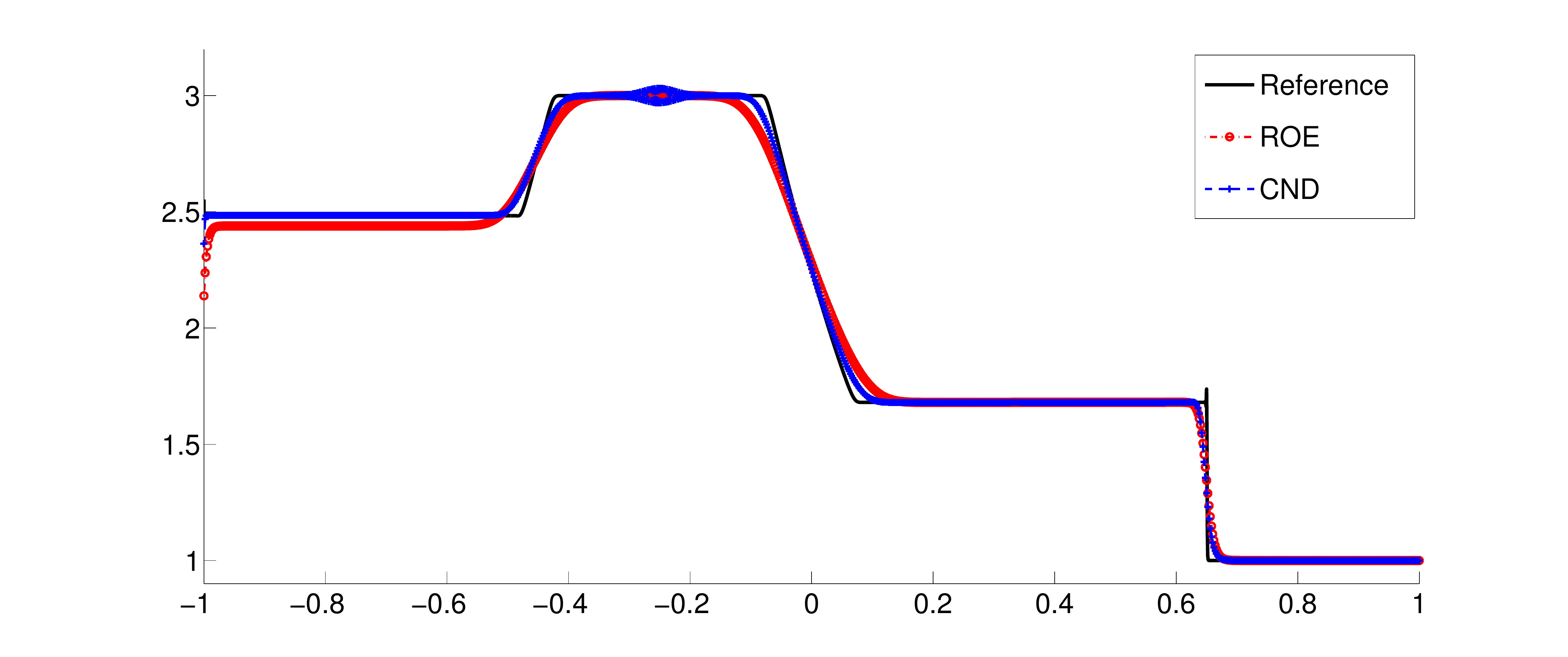}} 
\caption{Approximate solutions of the compressible Euler equations \eqref{eq:euler} with initial data \eqref{eq:eulinit} and boundary data \eqref{eq:eulbd}. We compare the Roe and CND schemes on $1000$ mesh points with a reference solution of the compressible Navier-Stokes equations \eqref{eq:cns} with $\kappa = \nu = 10^{-5}$.}
\label{fig:4}
\end{figure}
\section{Second-order CND schemes}
\label{sec:so}
The CND scheme, described in the last section, was first-order accurate in space. Consequently, it approximated shocks and contact discontinuities with excessive smearing, particularly on coarse meshes. We can improve the resolution of numerical schemes by constructing second-order accurate schemes. 

To this end, we reconstruct the cell averages $\U_j$ of the unknown to a piecewise linear function given by
\begin{equation}
\label{eq:2o1}
{\bf p}_{j}(x) := \U_{j} + \frac{\U^{\prime}_{j}}{\Delta x}(x - x_j).
\end{equation}
The numerical derivative $\U^{\prime}$ is chosen to be non-oscillatory by limiting the slope, i.e. setting 
\begin{equation}
\label{eq:2o2}
\U^{\prime}_{j} = {\rm minmod}(\U_{j+1} - \U_{j}, \U_{j} - \U_{j-1}),
\end{equation}
with the minmod function defined as 
\begin{equation}
{\rm minmod}(a,b)= \left\{
\begin{aligned}
 & sgn(a)\min\{|a|,|b|\}, &{\rm if} \quad sgn(a) = sgn(b), \\
& 0, \quad \quad {\rm otherwise} .
\end{aligned}
\right.
\end{equation}
Other limiters like the MC and Superbee limiters can also be chosen  (see the book by LeVeque\cite{LEV1} for the corresponding definitions). We need the cell interface values
\begin{equation}
\label{eq:2o4}
\U^{+}_{j} := {\bf p}_{j}(x_{j+1/2}),\quad \U^{-}_{j} := {\bf p}_{j}(x_{j-1/2}).
\end{equation}
With these reconstructed values, we modify the numerical flux \eqref{eq:nf2} by setting
\begin{equation}
\label{eq:sonf}
\F_{j+1/2} = \F^{\ast}_{j+1/2} - \frac{1}{2} \tilde{\D}_{j+1/2},
\end{equation}
with 
\begin{equation}
\label{eq:ndiff2}
\tilde{\D}_{j+1/2} = \tilde{\D}(\U^+_j,\U^-_{j+1}) = c_{\max} \B(\overline{\U}_{j+1/2})\left(\U^-_{j+1} - \U^+_j\right), 
\end{equation}
where the constant $c_{\max}$ is the same as in~\eqref{eq:maxe}.
Note that the \emph{only difference} between the flux \eqref{eq:nf2} and the flux \eqref{eq:sonf} lies in replacing the difference in cell averages in the numerical diffusion operator in \eqref{eq:ndiff} with the difference in the corresponding reconstructed edge values in \eqref{eq:ndiff2}. The overall scheme \eqref{eq:fds} with numerical flux \eqref{eq:sonf} is (formally) second-order accurate as the entropy conservative flux $\F^{\ast}$ is second order accurate (see Tadmor~\cite{TAD1}) and the difference in the numerical diffusion operator is a difference of second-order reconstructed values, see Fjordholm, Mishra and Tadmor~\cite{FMT4} for a proof of the order of accuracy of schemes constructed with numerical fluxes like \eqref{eq:sonf}. 

We test this second-order scheme \eqref{eq:fds}, \eqref{eq:sonf} for the compressible Euler equations. Let the computational domain be $[-1,1]$ with initial data \eqref{eq:eulinit} and Dirichlet data \eqref{eq:eulbd}. 

The scheme \eqref{eq:fds} is specified as follows: the entropy conservative flux in numerical flux \eqref{eq:sonf} is given by \eqref{eq:eulerec}. The numerical diffusion is 
\begin{equation}
\label{eq:eulernd2}
\begin{aligned}
\D^{\ast}_{j+1/2} &= \left[\D^{1}_{j+1/2},\D^{2}_{j+1/2},\D^{3}_{j+1/2}\right]^{\top}, \\
\D^{1}_{j+1/2} &= 0, \\
\D^{2}_{j+1/2} &=   \left(\max\limits_{j} \left(|u_j| + \sqrt{\frac{\gamma p_j}{\rho_j}}\right)\right)\left(u^-_{j+1} - u^+_j\right), \\
D^{3}_{j+1/2} &=   \left(\max\limits_{j} \left(|u_j| + \sqrt{\frac{\gamma p_j}{\rho_j}}\right)\right)\left(\frac{1}{2}((u^-_{j+1})^2 -(u^+_j)^2) + (\theta^-_{j+1} - \theta^+_j) \right),
\end{aligned}
\end{equation}
with $u^{\pm},\theta^{\pm}$ being obtained from the reconstructed conservative variables. The overall scheme (integrated in time with the SSP RK2 time stepping \eqref{eq:SSP}) is termed as the \emph{CND2} scheme.
\begin{figure}[htbp]
\centering
\subfigure[Density ($\rho$)]{\includegraphics[width=9cm]{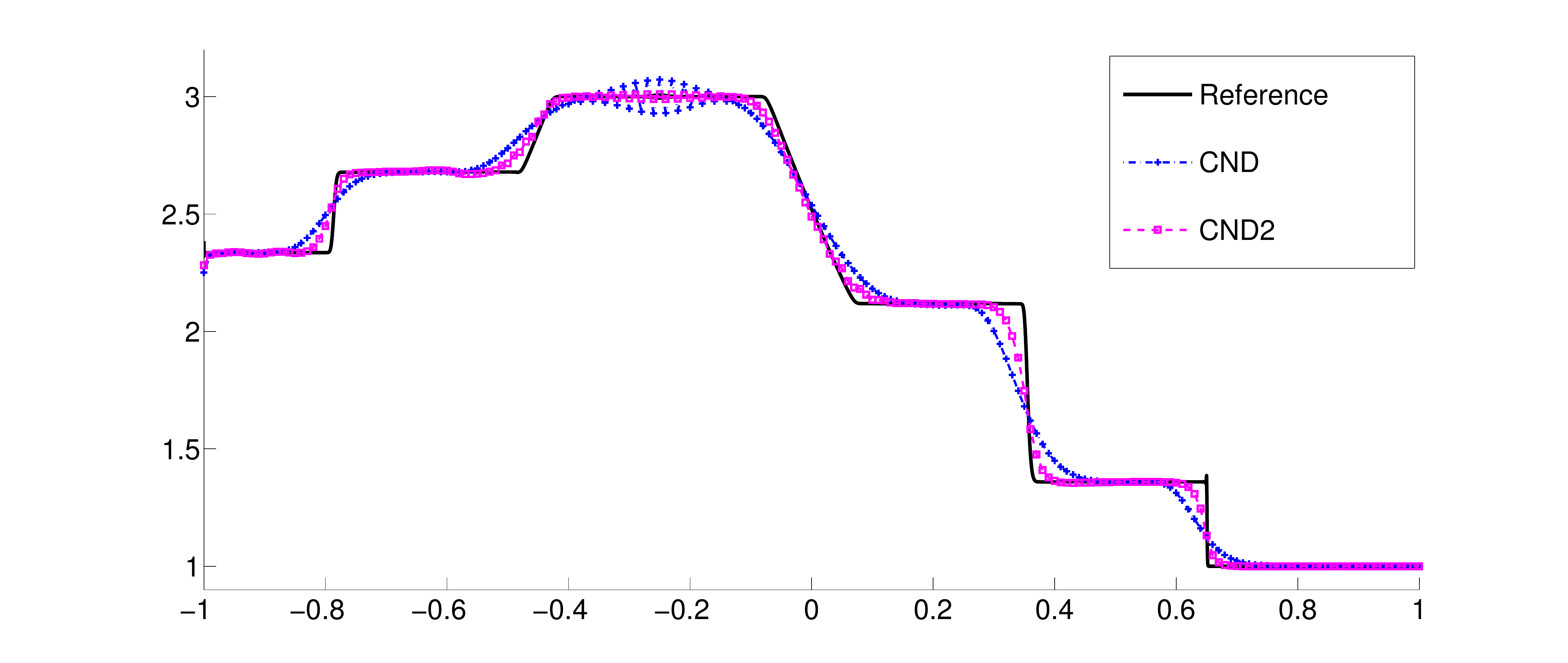}} \\
\subfigure[Velocity ($u$)]{\includegraphics[width=9cm]{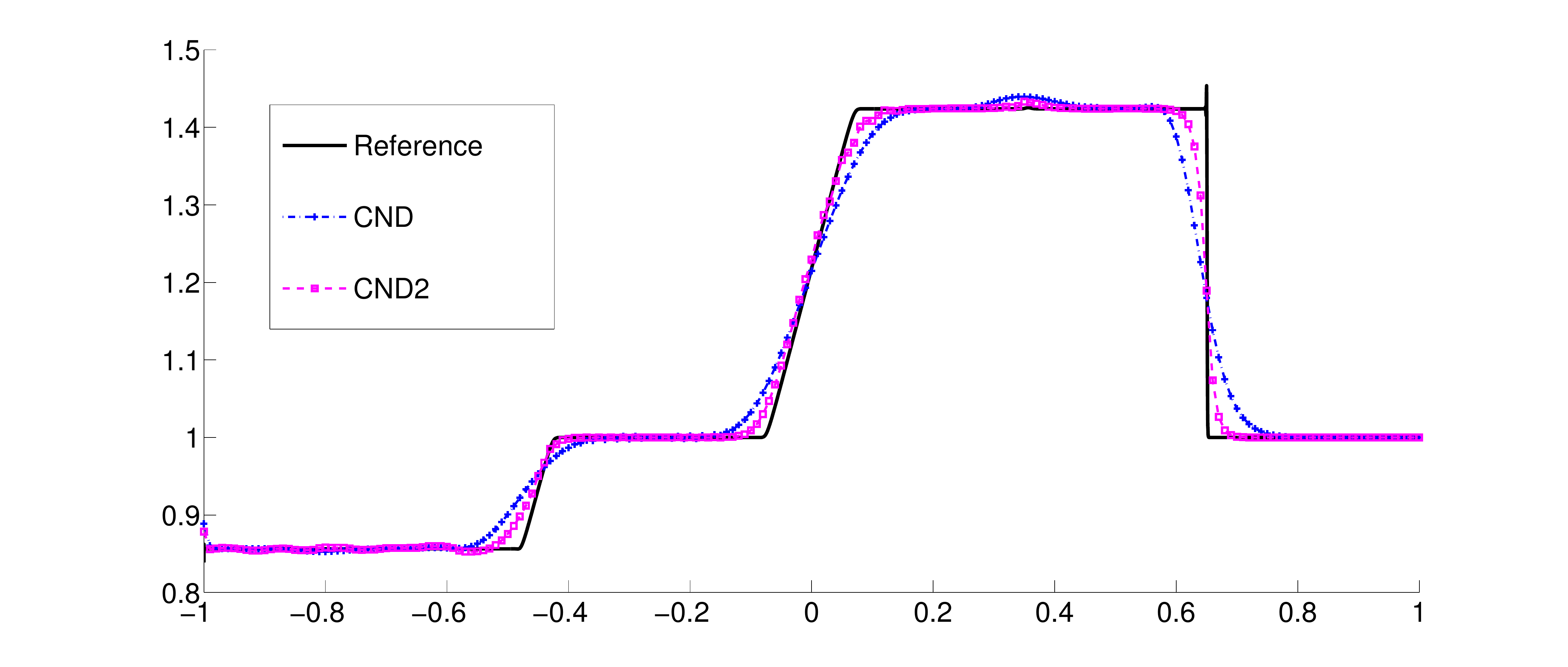}} \\
\subfigure[Pressure ($p$)]{\includegraphics[width=9cm]{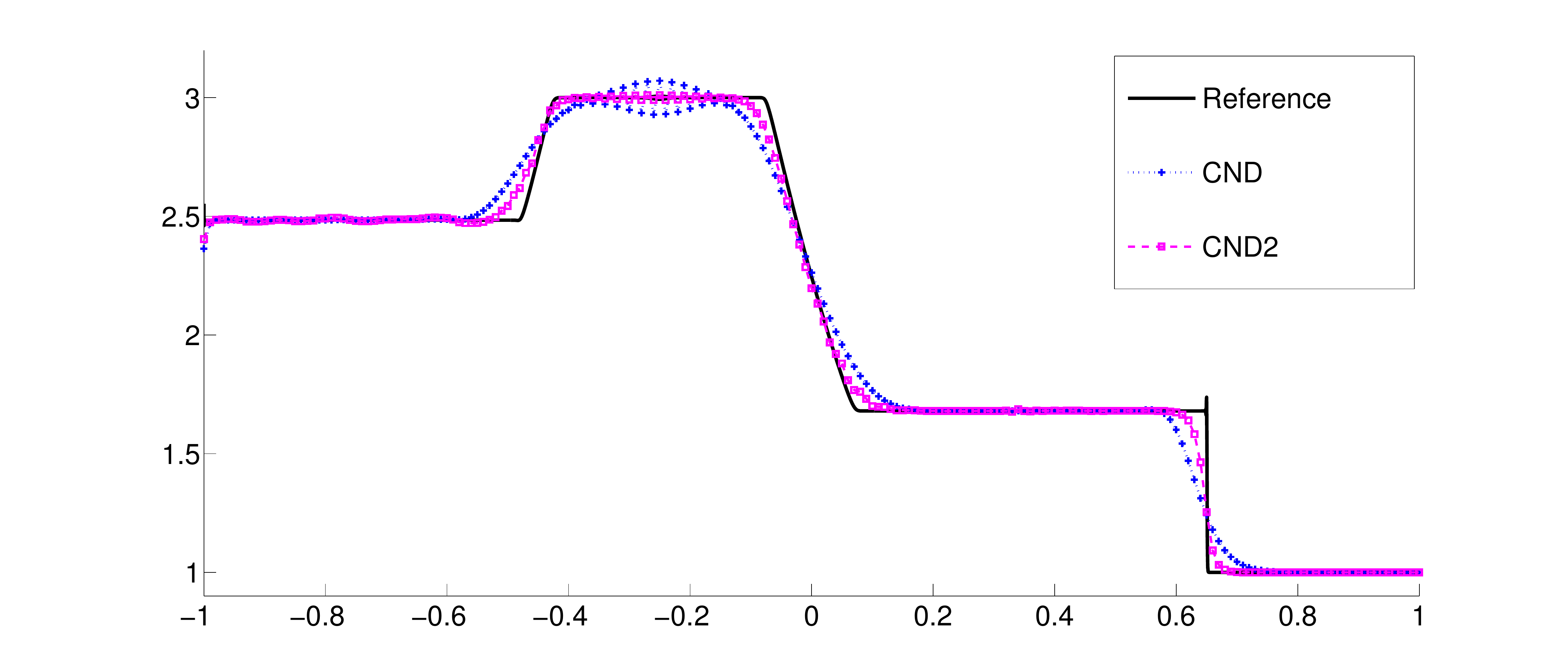}} 
\caption{Approximate solutions of the compressible Euler equations \eqref{eq:euler} with initial data \eqref{eq:eulinit} and boundary data \eqref{eq:eulbd}. We compare the CND and CND2 schemes on $200$ mesh points with a reference solution of the compressible Navier-Stokes equations \eqref{eq:cns} with $\kappa = \nu = 10^{-5}$.}
\label{fig:5}
\end{figure}
\par We compute approximate solutions of the Euler equations with  initial data \eqref{eq:eulinit} and boundary data \eqref{eq:eulbd} using the CND and CND2 schemes and show the results, obtained on a mesh of $200$ points, in figure \ref{fig:5}. The result shows that both the first and second order CND schemes approximate the physically relevant solution, computed as the limit of the compressible Navier-Stokes equations, quite well. The first-order scheme smears the discontinuities as well as generates oscillations. On the other hand, the second-order scheme is clearly sharper at discontinuities. Furthermore, it reduces the oscillations considerably. 
\section{Conclusion}
We consider the initial-boundary value problem for systems of conservation laws \eqref{eq:cl}. Since the work by Gisclon and Serre~\cite{G1,GS1} it is known that the solutions of the initial boundary value problem depend on the underlying viscous approximation~\eqref{eq:vcl}.  Different choices of viscosity operators can lead to different 
solutions for the limit system of conservation laws \eqref{eq:cl}. These results hold for both linear as well as non-linear systems. Even $2\times 2$, strictly hyperbolic, 
symmetrizable linear systems like the linearized shallow water equations \eqref{eq:lsws} show this behavior. 

This dependence of solutions on underlying small scale effects suggests that one should discretize 
the viscous approximation~\eqref{eq:vcl} directly. However, this is very expensive computationally on account of very low values of the viscosity parameter. 
Therefore, we need to design numerical schemes for the system of conservation laws \eqref{eq:cl} that converge to the physically relevant solutions i.e the limit of solutions of \eqref{eq:vcl} as $\epsilon \rightarrow 0$. Unfortunately, existing numerical schemes like the standard Godunov, Roe and HLL schemes might converge of the physically incorrect solution of the initial-boundary value problem. 

In this paper, we design a conservative finite difference scheme \eqref{eq:fds} with a numerical flux \eqref{eq:nf2} based on the following two ingredients:
\begin{itemize}
\item entropy conservative fluxes \eqref{eq:ecf};
\item numerical diffusion operators \eqref{eq:ndiff}.
\end{itemize}
Information about the underlying viscous approximation~\eqref{eq:vcl} is explicitly incorporated into the 
choice of the numerical diffusion operator. The resulting entropy stable schemes are shown (numerically) to 
converge to the limit solution, obtained from the underlying viscous approximation. 
Thus, we provide a numerical framework for computing solutions of the system of conservation laws 
that require explicit information about the underlying small scale effects. To the best of our knowledge, this is the first time such schemes have been constructed in the context of initial-boundary value problems. 

We present a set of numerical experiments for both the linearized shallow water and 
nonlinear Euler equations to demonstrate that our numerical schemes do converge to the limit solutions of the underlying 
eddy viscosity or Navier-Stokes viscosity, respectively. Second-order schemes are constructed and are shown to 
be superior to first-order schemes in terms of accuracy as well as in suppressing oscillations that might result 
from a lack of viscosity in some conservative variables. At the same time, these second-order schemes also 
converge to the physically relevant solutions. 

We concentrated on Dirichlet boundary conditions in one space dimension in this paper. 
Extensions to several space dimensions and to other interesting boundary conditions will be considered in a 
forthcoming paper.

\end{document}